\newtheorem{theorem}{Theorem}[section]
\newtheorem{lemma}[theorem]{Lemma}
\newtheorem{cor}[theorem]{Corollary}
\theoremstyle{definition}
\newtheorem{example}[theorem]{Example}
\newtheorem{rem}[theorem]{Remark}
\theoremstyle{remark}
\numberwithin{equation}{section}
\newcommand{\R}{{\mathbb R}}
\newcommand{\Q}{{\mathbb Q}}
\newcommand{\N}{{\mathbb N}}
\newcommand{\Z}{{\mathbb Z}}
\newcommand{\sgn}{{\rm sgn}}
\newcommand{\stab}{{\rm stab}}
\begin{document}

\title[Quotients of index two and general quotients]{Quotients of index two and general quotients in a space of orderings}

\thanks{This research was supported through the programme ``Research in Pairs'' by the Mathematisches Forschungsinstitut Oberwolfach in 2011. An early version of the paper has been published an Oberwolfach Preprint (OWP
2011-36).}

\thanks{The second author was supported by the NSERC of Canada.}

\thanks{This research was initiated when the second author was visiting University of Silesia in Katowice, Poland.}

\author{Pawe\l \ G\l adki and Murray Marshall}

\address{Institute of Mathematics,
University of Silesia, \newline \indent
ul. Bankowa 14, 40-007 Katowice, Poland}
\email{pawel.gladki@us.edu.pl}

\address{Department of Mathematics and Statistics,
University of Saskatchewan, \newline \indent
106 Wiggins Rd., Saskatoon, SK S7N 5E6, Canada}
\email{marshall@math.usask.ca}

%\date{October 26, 2010}

\begin{abstract} In this paper we investigate quotient structures and quotient spaces of a space of orderings arising from subgroups of index two. We provide necessary and sufficient conditions for a quotient structure to be a quotient space that, among other things, depend on the stability index of the given space. The case of the space of orderings of the field $\Q(x)$ is particularly interesting, since then the theory developed simplifies significantly. A part of the theory firstly developed for quotients of index 2 generalizes in an elegant way to quotients of index $2^n$ for arbitrary finite $n$. Numerous examples are provided.
\end{abstract}

\maketitle

\section{Introduction and notation}

The theory of abstract spaces of orderings was developed by Murray Marshall in a series of papers from the late 1970s, and provides an abstract framework for studying orderings of fields and the reduced theory of quadratic forms in general. The monograph \cite{M3} will be of frequent use here as far as background, notation, and main results are concerned. Spaces of orderings also occur in a natural way in other more general settings: from considering maximal orderings on semi-local rings, orderings on skew fields, or orderings on ternary fields. The axioms for spaces of orderings have been also generalized in various directions -- to quaternionic schemes, to spaces of signatures of higher level, or to abstract real spectra that are used to study orderings on commutative rings.

An elegant first-order description of spaces of orderings is given in \cite{Marshall2006}. Here, we prefer to use the earlier description given in \cite{M3}, i.e., a space of orderings is a pair $(X, G)$ such that $X$ is a nonempty set, $G$ is a subgroup of $\{1, -1\}^X$, which contains the constant function $-1$, separates points of $X$, and satisfies two additional axioms.
Considering $X$ as a subset of the character group $\chi(G)$ (here by characters we mean group homomorphisms $x: G \rightarrow \{-1, 1\}$) via the natural embedding $X \hookrightarrow \chi(G)$ obtained by identifying $x \in X$ with the character $G \ni a \mapsto a(x) \in \{-1, 1\}$, and denoting,
for any pair $a, b \in G$:
$$D(a,b) = \{c \in G: \forall x \in X (c(x) = a(x) \vee c(x) = b(x))\},$$
the two additional axioms state
\begin{enumerate}
\item[(1)] if $x \in \chi(G)$ satisfies $x(-1) = -1$, and if
$$\forall a, b \in \ker (x) \ (D(a,b) \subseteq \ker(x)),$$
then $x$ is in the image of the natural embedding $X \hookrightarrow \chi(G)$, and
\item[(2)] $\forall a_1, a_2, a_3, b, c \in G \ \exists d \in G \ [(b \in D(a_1, c) \wedge c \in D(a_2, a_3)) \Rightarrow (b \in D(d, a_3) \wedge d \in D(a_1, a_2))]$.
\end{enumerate}
Spaces of orderings are easily made into a category by introducing morphisms in the following way: a morphism $F$ from a space of orderings $(X_1, G_1)$ to a space of orderings $(X_2, G_2)$ is a function $F: X_1 \rightarrow X_2$ such that
$$\forall b \in G_2 \ (b \circ F \in G_1).$$

In this paper we shall investigate quotient objects in the category of spaces of orderings. They have been first studied in \cite{M-1} and, up to present day, remain rather mysterious creatures: if $(X, G)$ is a space of orderings, and $G_0$ is a subgroup of $G$ containing the element $-1$, we denote by $X_0$ the set $X|_{G_0}$ of all characters from $X$ restricted to $G_0$, and call the pair $(X_0, G_0)$ a quotient structure. In the case when $(X_0, G_0)$ is a space of orderings, we call it a quotient space of $(X, G)$. It has been shown in \cite{M-1} that quotient spaces are, indeed, quotients in the category of spaces of orderings. At the same time the problem of determining whether a given quotient structure is a quotient space proves to be surprisingly challenging. %In particular, checking whether the axiom (2) is satisfied is a very difficult task.
The main objective of this paper is to address this question in some special cases. In what follows we shall introduce some more notation, and then proceed to explain our motivation for the current work as well as present our main results.

A space of orderings $(X, G)$ has a natural topology introduced by the family of subbasic clopen Harrison sets:
$$H_X(a) = \{x \in X: a(x) = 1\},$$
for a given $a \in G$, which makes $X$ into a Boolean space (\cite[Theorem 2.1.5]{M3}). Whenever it is clear from the context which space of orderings we consider, we shall simply write $H(a)$ instead of $H_X(a)$.

For any multiplicative group $G$ of exponent 2 with distinguished element $-1$, we set $X = \{x \in \chi(G): x(-1) = -1\}$ and call the pair $(X,G)$ a fan. A fan is also a space of orderings (\cite[Theorem 3.1.1]{M3}).
We can also consider fans within a bigger space of orderings, and for this we need the notion of a subspace of a space $(X, G)$ -- a subset $Y \subseteq X$ is called a subspace of $(X, G)$, if $Y$ is expressible in the form $\bigcap_{a \in S} H_X(a)$, for some subset $S \subseteq G$. For any subspace $Y$ we will denote by $G|_Y$ the group of all restrictions $a|_Y$, $a \in G$. The pair $(Y, G|_Y)$ is a space of orderings itself (\cite[Theorem 2.4.3]{M3}, \cite[Theorem 2.2]{M-2}). Finally, if $(X, G)$ is a space of orderings, by a fan in $(X, G)$ we understand a subspace $V$ such that the space $(V, G|_{V})$ is a fan. One easily checks that any one- or two-element subset of a space of orderings forms a fan -- thus one- or two-element fans are called trivial fans.

The stability index $\stab (X, G)$ of a space of orderings $(X, G)$ is the maximum $n$ such that there exists a fan $V \subseteq X$ with $|V| = 2^n$ (or $\infty$ if there is no such $n$). It can be shown that the stability index of a space $(X, G)$ is at most equal to $k$ if every basic set in $X$ can be expressed as an intersection of $k$ Harrison sets (\cite[Theorem 3.4.2]{M3}, \cite[Theorem 6.2]{M}). Spaces of stability index 1 are also called spaces with the strong approximation property or SAP spaces.

We say that $(X, G)$ is the direct sum of the spaces of orderings $(X_i, G_i)$, $i \in \{1, \ldots, n\}$, denoted $(X, G) = \coprod_{i=1}^n (X_i, G_i) = (X_1, G_1) \sqcup \ldots \sqcup (X_n, G_n)$, if $X$ is the disjoint union of the sets $X_1, \ldots, X_n$, and $G$ consists of all functions $a: X \rightarrow \{-1, 1\}$ such that $a|_{X_i} \in G_i$, $i \in \{1, \ldots, n\}$. In this case $G = G_1 \oplus \ldots \oplus G_n$, with the role of the distinguished element $-1$ played by $(-1, -1, \ldots, -1)$. Further, we say that $(X, G)$ is a group extension of the space of orderings $(\overline{X}, \overline{G})$, if $G$ is a group of exponent 2, $\overline{G}$ is a subgroup of $G$, and $X = \{x \in \chi(G): x|_{\overline{G}} \in \overline{X}\}$. Since $G$ decomposes as $G = \overline{G} \times H$, we shall also write $(X,G) = (\overline{X}, \overline{G}) \times H$ to denote group extensions. Both direct sums and group extensions are spaces of orderings (\cite[Theorem 4.1.1]{M3}, \cite[Remark 2.8, Remark 3.7]{M-1}).

For $S\subseteq G$, $\langle S\rangle$ denotes the subgroup of $G$ generated by $S$. For $S \subseteq \chi(G)$, $S^{\perp}:= \{ g\in G : \sigma(g)=1 \forall \sigma \in S\}$ and $\langle S \rangle := \chi(G/S^{\perp})$, the closed subgroup of $\chi(G)$ generated by $S$.  %We shall use the notation $\langle a_1, \ldots, a_n \rangle$ to denote the group generated by the elements $a_1, \ldots, a_n$.

For a space of orderings $(X, G)$ we define the connectivity relation $\sim$ as follows: if $x_1, x_2 \in X$, then $x_1 \sim x_2$ if and only if either $x_1 = x_2$ or there exists a four element fan $V$ in $(X, G)$ such that $x_1, x_2 \in V$. The equivalence classes with respect to $\sim$ are called the connected components of $(X, G)$.
It is known that if  $(X, G)$ is a finite space of orders and $X_1, \ldots, X_n$ are its connected components, then $(X, G) = (X_1, G|_{X_1}) \sqcup \ldots \sqcup (X_n, G|_{X_n}).$ Moreover, the $(X_i, G|_{X_i})$, are either one element spaces or proper group extensions (\cite[Theorem 4.2.2]{M3}, \cite[Theorem 4.10]{M-2}).

For any space of orderings $(X, G)$, a quadratic form with entries in $G$ is an $n-$tuple $\phi = (a_1, \ldots, a_n)$, $a_1, \ldots, a_n \in G$; $n$ is called the dimension of $\phi$, and, for each $x \in X$, the signature of $\phi$ at $x$ is $\sgn_x \phi = \sum_{i=1}^n a_i(x) \in \Z$. %If $\phi = (a_1, \ldots, a_n)$, $\psi = (b_1, \ldots, b_m)$, $c \in G$, and $k \in N$, we define
\begin{eqnarray*}
\phi \oplus \psi & = & (a_1, \ldots, a_n, b_1, \ldots, b_m), \\
c \phi & = & (ca_1, \ldots, ca_n), \\
\phi \otimes \psi & = & a_1 \psi \oplus \ldots \oplus a_n \psi, \\
((a_1, \ldots, a_n)) & = & (1, a_1) \otimes \ldots \otimes (1, a_n), \\
k \times \phi & = & \underbrace{\phi \oplus \ldots \oplus \phi}_k.
\end{eqnarray*}
Two forms $\phi$ and $\psi$ are isometric, written $\phi \cong \psi$, if they are of the same dimension and signatures for all $x \in X$. Further, we say that $\phi$ and $\psi$ are Witt equivalent, denoted $\phi \sim \psi$, if there exist integers $k, l \geq 0$ such that
$$\phi \oplus k \times (1, -1) \cong \psi \oplus l \times (1, -1).$$ The sum $\oplus$ and the product $\otimes$ of quadratic forms induce binary operations on the set of equivalence classes of the relation $\sim$ making it into a commutative ring with $1$ that is denoted by $W(X, G)$ and called the Witt ring associated to the space of orderings $(X, G)$. The ideal of the ring $W(X, G)$ additively generated by the set $\{(1, a): a \in G\}$ is denoted by $I(X, G)$ and called the fundamental ideal of $W(X, G)$.

For a formally real field $k$ denote by $X_k$ the set of all orderings of $k$, and by $G_k$ the multiplicative group $k^*/(\Sigma k^2)^*$ of all classes of sums of squares of $k$. $G_k$ is naturally identified with a subgroup of $\{-1,1\}^{X_k}$ via the homomorphism
$$k^* \ni a \mapsto \overline{a} \in \{-1, 1\}^{X_k}, \mbox{ where } \overline{a}(\sigma) = \left\{ \begin{array}{ll} 1, & \mbox{ if } a \in \sigma,\\
-1, & \mbox{ if } a \notin \sigma, \end{array} \mbox{ for } \sigma \in X_k, \right.$$
whose kernel is the set $(\Sigma k^2)^*$ of all nonzero sums of squares of $k$, and $(X_k, G_k)$ is a space of orderings (\cite[Theorem 2.1.4]{M3}). For simplicity we shall denote by the same symbol $a$ both an element $a \in k^*$, a class of sums of squares $a \in k^*/(\Sigma k^2)^*$, and a function $a \in \{-1, 1\}^{X_k}$.

Since the invention of abstract spaces of orderings there has been a considerable interest in the question of when such a space is realized as a space of orderings of a field. It seems likely that spaces of orderings exist that are not so realized but, so far, no such examples are known. A possible way of proving that a space of orderings is not realized in such a way is to give an example of a form $\phi \in W(X, G)$ such that
\begin{center} $\forall \sigma \in X (\sgn_{\sigma} \phi \equiv 0 \mod 2^n)$ and $\phi \notin I^n(X, G)$.\end{center}
Here $W(X, G)$ denotes the Witt ring of the space of orderings $(X, G)$, and $I^n(X, G)$ denotes the $n$-th power of its fundamental ideal. The equivalence:
\begin{center} $\forall \sigma \in X (\sgn_{\sigma} \phi \equiv 0 \mod 2^n)$ if and only if $\phi \in I^n(X, G)$.\end{center}
is valid for any $n \in \N$, if $(X, G)$ is a space of orderings of a field. This was conjectured by Marshall and is also known as {\em Lam's Open Problem B} \cite{Lam77}. For $n \leq 2$ the
equivalence is easy to show, but for $n \geq 3$ the proof uses a deep result from \cite{OVV} and \cite{V}. A short explanation of how to derive the equivalence from that result can be found
in \cite{Marshall2000}, and a longer exposition on the theme in \cite{DM03}.

The problem also has an affirmative solution for all spaces of orderings of stability index no greater than 3 \cite[Theorem 6.2]{M}. It is, therefore, desirable to seek for examples of spaces of orderings of high stability indices. A possible way of finding such spaces is to investigate quotients of already known spaces: for a given space of orderings $(X, G)$ one can imagine some large subset of $X$ that is itself not a fan, but, by descending to an appropriate subgroup $G_0$ of $G$, can be forced to become a fan in the quotient space $(X|_{G_0}, G_0)$, thus increasing the stability index. The main problem, of course, is that in doing so one obtains a myriad of quotient structures with no way of knowing if they are actually quotient spaces.

We begin our discussion in Section 2 with the case when $G_0$ is a subgroup of $G$ of index two and provide a necessary and sufficient condition for a quotient structute of an SAP space $(X,G)$ to be a quotient space. We then refine our condition and show that this refined condition is both necessary and sufficient for spaces of stability index two that satisfy an extra technical condition. The refinement process can be continued and provides a sequence of necessary conditions -- unfortunately, other than in the two aforementioned cases and the finite case, we do not know when these conditions are sufficient.

In Section 3 we consider the special case when the space $(X, G)$ is the space of orderings of the field $\Q(x)$. The refined condition for spaces of stability index two is here also sufficient and, moreover, is also equivalent to saying that the quotient space is profinite. This generalizes results previously obtained in \cite{gj1}. Section 4 contains a handful of examples to illustrate the theory.

Finally, in Section 5, we generalize the necessary conditions to the case of subgroups of arbitrary (possibly infinite) index and discuss some of the instances when they are also sufficient. Surprisingly enough, the theory simplifies significantly in the case of the space of orderings of the field $\Q(x)$ and we show that a quotient structure of any finite index is a quotient space if and only if it is profinite.

\section{Quotients of index two}\label{general theory}

Let $(X,G)$ be a space of orderings and $(X_0, G_0)$ a quotient structure of $(X,G)$. We search for necessary and sufficient conditions on $G_0$ for $(X_0,G_0)$ to be a quotient space of $(X,G)$.

\begin{example} Suppose $G_0$ is a subgroup of $G$ and $-1 \in G_0$. Suppose $\sigma_0 : G_0 \rightarrow \{\pm 1\}$ is a character satisfying $\sigma_0(-1)=-1$. Let $\sigma : G \rightarrow \{\pm 1\}$ be an extension of $\sigma_0$ to a character on $G$. If $(X,G)$ is a fan then $\sigma \in X$ so $\sigma_0 \in X_0$. Thus if $(X,G)$ is a fan and $-1 \in G_0$ then $(X_0,G_0)$ is also a fan. In particular, $(X_0,G_0)$ is a quotient space of $(X,G)$, and any quotient structure of a fan is a quotient space that is a fan itself.
\end{example}

Assume now that $G_0$ is a subgroup of index $2$ in $G$, $-1 \in G_0$. Since $G_0$ has index $2$ and $-1 \in G_0$, $G_0$ is determined by a character on $G/\{\pm 1\}$, i.e., there exists a unique $\gamma \in \chi(G)$, $\gamma(-1)=1$ such that $G_0 = \ker(\gamma)$.

\begin{theorem} \label{basicstuff} (i) Suppose $(X_0,G_0)$ is a quotient space of $(X,G)$, $(Y,G/Y^{\perp})$ is a subspace of $(X,G)$, $\gamma \in \langle Y\rangle$ and $Y_0 = Y|_{G_0}$. Then $(Y_0,G_0/Y^{\perp})$ is a quotient space of $(Y,G/Y^{\perp})$. (ii) Suppose $(X,G)$ is a group extension of $(X',G')$ and $\gamma' = \gamma|_{G'}$. If $\gamma' = 1$, then $(X_0,G_0)$ is a quotient space of $(X,G)$. If $\gamma' \ne 1$, $(X_0,G_0)$ is a quotient space of $(X,G)$ iff $(X_0',G_0')$ is a quotient space of $(X',G')$. Here, $G_0':= \ker(\gamma')$, $X_0' := X'|_{G_0'}$.
\end{theorem}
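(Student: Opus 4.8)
The plan is to use throughout that a quotient structure $(X_0,G_0)$ is a quotient space exactly when it is itself a space of orderings, and to establish this structurally rather than axiomatically. A direct axiom check is awkward: one computes quickly that $D_{X_0}(a,b)=D_X(a,b)\cap G_0$ for $a,b\in G_0$, so the only delicate axiom is the realization of characters, and the witness $d$ in axiom (2) may fall outside $G_0$. Instead I would exhibit the relevant object as a \emph{subspace} or a \emph{group extension} of a space already known to be a space of orderings, and invoke the cited facts that subspaces and group extensions of spaces of orderings are again spaces of orderings.

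For part (i) the key is that $\gamma\in\langle Y\rangle=\chi(G/Y^{\perp})$ is equivalent to $\gamma$ killing $Y^{\perp}$, i.e.\ to $Y^{\perp}\subseteq\ker(\gamma)=G_0$. First I would record that a subspace is cut out by its own perp: if $Y=\bigcap_{a\in S}H_X(a)$ then $S\subseteq Y^{\perp}$, which forces $Y=\bigcap_{a\in Y^{\perp}}H_X(a)$. Since now every index $a$ lies in $G_0$, the Harrison set $H_X(a)$ is the preimage of $H_{X_0}(a)$ under the restriction map $r\colon X\to X_0$, $x\mapsto x|_{G_0}$; as $r$ is onto, applying $r$ gives $Y_0=r(Y)=\bigcap_{a\in Y^{\perp}}H_{X_0}(a)$, a subspace of $(X_0,G_0)$. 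Because $(X_0,G_0)$ is assumed to be a quotient space, hence a space of orderings, its subspace is a space of orderings; and one checks $G_0|_{Y_0}=G_0/(G_0\cap Y^{\perp})=G_0/Y^{\perp}$, so $(Y_0,G_0/Y^{\perp})$ is the desired quotient space of $(Y,G/Y^{\perp})$.

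For part (ii) I would write $G=G'\times H$, so $X=X'\times\chi(H)$ and $\gamma=(\gamma',\gamma_H)$ with $\gamma_H=\gamma|_H$. If $\gamma'=1$ then $\gamma_H\ne1$ and $G_0=G'\times H_0$ with $H_0=\ker(\gamma_H)$ of index $2$; restricting gives $X_0=X'\times\chi(H_0)$, so $(X_0,G_0)=(X',G')\times H_0$ is a group extension of the space of orderings $(X',G')$, hence a space of orderings and a quotient space. If $\gamma'\ne1$, I would exploit that the group‑extension structure is independent of the chosen complement: viewing $G$ as an $\mathbb{F}_2$-vector space and choosing $\tilde H\subseteq G_0$ with $G_0=G_0'\oplus\tilde H$ (where $G_0'=\ker(\gamma')$), a routine dimension count gives $G=G'\times\tilde H$ with $\gamma|_{\tilde H}=1$ and $G_0=G_0'\times\tilde H$. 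Restricting as before yields $(X_0,G_0)=(X_0',G_0')\times\tilde H$, a group extension of the quotient \emph{structure} $(X_0',G_0')$.

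The crux, and the step I expect to be the main obstacle, is then the lemma that a group extension $(Z,K)\times H$ is a space of orderings if and only if $(Z,K)$ is. The ``if'' direction is exactly the cited fact about group extensions, and together with the preceding paragraph it settles the direction ``$(X_0',G_0')$ a quotient space $\Rightarrow (X_0,G_0)$ a quotient space''. For the ``only if'' direction I would realize $(Z,K)$ as a subspace of $W=(Z,K)\times H$: the set $Y=\bigcap_{h\in H}H_W(h)=\{w:w|_H=1\}$ is a subspace, the map $z\mapsto(z,1)$ carries $Z$ onto $Y$, and the restriction $K\times H\to(K\times H)|_Y$ has kernel $Z^{\perp}\times H$, which equals $H$ because $Z=X_0'$ separates $K=G_0'$ (a consequence of $(X')^{\perp}=1$). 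Hence $(Y,(K\times H)|_Y)\cong(Z,K)$ is a subspace of the space of orderings $W$, so it is a space of orderings. The care needed lies precisely in this identification of the groups and in the non‑degeneracy input $Z^{\perp}=1$; once that is in hand, the two reductions close both parts of the theorem.
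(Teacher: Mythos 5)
Your proposal is correct and follows essentially the same route as the paper: part (i) by exhibiting $(Y_0,G_0/Y^{\perp})$ as a subspace of $(X_0,G_0)$, and part (ii) by identifying $(X_0,G_0)$ as a group extension of $(X',G')$ when $\gamma'=1$ and of $(X_0',G_0')$ when $\gamma'\ne 1$. The only difference is that you supply the details (the choice of complement $\tilde H\subseteq G_0$, and the two directions of the group-extension lemma, including realizing the base as the subspace $\bigcap_{h\in H}H_W(h)$) which the paper dispatches by citation.
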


\begin{proof} (i) $(Y_0,G_0/Y^{\perp})$ is a subspace of $(X_0,G_0)$ so (i) is clear. (ii) If $\gamma'=1$ then $(X_0,G_0)$ is a group extension of $(X',G')$. If $\gamma'\ne 1$, then $(X_0,G_0)$ is a group extension of $(X_0',G_0')$. In either case the result is clear, e.g., by \cite[Theorem 4.11 (2) and Theorem 4.1.3 (2)]{M3}.
\end{proof}

One needs to realize that the situation where $(X_0, G_0)$ is a space of orderings is rather special.

\begin{example} Consider the finite space $(X, G)$ of six orderings $\{\sigma_1, \ldots, \sigma_6\}$ and of stability index 1, so that $|G| = 2^6$. Let $\gamma = \sigma_1 \cdot \ldots \cdot \sigma_6$. The quotient structure $(X_0, G_0)$ consists of six orderings $\overline{\sigma_1}, \ldots, \overline{\sigma_6}$ with $\overline{\sigma_6} = \overline{\sigma_1} \cdot \ldots \cdot \overline{\sigma_5}$, and $|G_0|=2^5$. By the structure theorem for finite spaces of orderings \cite[Theorem 4.2.2]{M3}, \cite[Theorem 4.10]{M-2} $(X_0,G_0)$ is not a space of orderings (e.g., because it is not SAP but contains no four element fans). %if $(X_0, G_0)$ is a space of orderings, then it is the group extension of a direct sum of three singleton spaces, and, in particular, contains four element fans. But no product of three elements of $X_0$ belongs to $X_0$ -- a contradiction.
\end{example}

\begin{theorem}\label{main} A necessary condition for $(X_0,G_0)$ to be a quotient of $(X,G)$ is that $\gamma \in X^4$.
\end{theorem}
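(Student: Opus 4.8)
The plan is to read the conclusion $\gamma \in X^4$ concretely as the assertion that $\gamma$ is a product $x_1x_2x_3x_4$ of four elements of $X$ inside $\chi(G)$, and to prove its necessity by an extremal argument on the number of factors needed to write $\gamma$. Since $G$ separates the points of $X$ we have $X^{\perp} = \{1\}$, hence $\langle X\rangle = \chi(G)$; as $\gamma(-1)=1$, the character $\gamma$ lies in the even part of $\chi(G)$, so it can be written as $\gamma = x_1 \cdots x_{2m}$ with all $x_i \in X$. I would fix such an expression with $m$ minimal (note $m\ge 1$, as $\gamma\neq 1$), and aim to show that the assumption that $(X_0,G_0)$ is a space of orderings forces $2m \le 4$, which gives $\gamma \in X^4$ (padding $X^2\subseteq X^4$ via $x_ix_i=1$ if needed).

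First I would pass to $G_0$. Writing $\bar{x}_i = x_i|_{G_0}\in X_0$ and using $\gamma|_{G_0}=1$ (since $G_0=\ker\gamma$), the relation $\prod_{i=1}^{2m}\bar{x}_i = 1$ holds in $\chi(G_0)$. I claim this dependency is \emph{minimal}, i.e.\ no proper nonempty subproduct $\prod_{i\in S}\bar{x}_i$ equals $1$. Indeed, such a subrelation would mean that $\prod_{i\in S}x_i$ restricts trivially to $G_0$, so $\prod_{i\in S}x_i \in\{1,\gamma\}$, these being the only two characters of $G$ killing $G_0$; moreover $\lvert S\rvert$ is even because the subproduct kills $-1$. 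In the first case $\gamma = \prod_{i\notin S}x_i$ and in the second $\gamma = \prod_{i\in S}x_i$, each a strictly shorter even expression for $\gamma$, contradicting the minimality of $m$. In particular the $\bar{x}_i$ are pairwise distinct. I would also observe that minimality of the dependency is automatically inherited by any subspace containing the points $\bar{x}_i$, since whether $\prod_{i\in S}\bar{x}_i = 1$ depends only on the values $\bar{x}_i(a)$ at those points.

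The heart of the matter is then the structural fact that a space of orderings cannot carry a minimal dependency among distinct orderings of length $\ge 6$; granting this, $2m\le 4$ and the theorem follows. To establish it I would reduce to the finite case via the theory's passage to finite subspaces, each of which is again a space of orderings by \cite[Theorem 2.4.3]{M3}, and then invoke the structure theorem \cite[Theorem 4.2.2]{M3}. A minimal dependency must lie in a single connected component, because in a direct sum an ordering is supported on one summand and a product of orderings is trivial iff its restriction to each summand is trivial; thus the dependency splits across components and minimality localizes it. Each component is a point or a proper group extension, and on a group extension $(\bar X,\bar G)\times K$ one writes $\bar{x}_i = (\eta_i,\kappa_i)$, so the relation splits into $\prod\eta_i = 1$ and $\prod\kappa_i = 1$; an induction on the order of the group — in which the only minimal relations contributed by the group part $K$ are four-element fans, and four distinct orderings with trivial product always form a fan — shows every minimal dependency is a four-element fan, of length $4$. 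This is exactly the obstruction isolated in the Example above: six orderings subject to a single product relation would produce a space that is not SAP yet contains no four-element fan, which no space of orderings permits.

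The main obstacle is precisely this last structural step. The transfer of the relation and of its minimality to the quotient is a short calculation, but ruling out minimal dependencies of length $\ge 6$ is where the real work lies: one must run the structure-theorem induction carefully through group extensions, controlling how a dependency distributes between the base $(\bar X,\bar G)$ and the fan directions coming from the group part. The secondary technical point is the reduction of the possibly infinite case to a finite subspace carrying the full dependency; here the observation that minimality is preserved under restriction to any subspace through the given points does most of the work, leaving only the arrangement of a finite such subspace to be handled with the standard finite-subspace machinery.
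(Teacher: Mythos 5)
Your argument breaks down at its very first step. From $X^{\perp}=\{1\}$ you conclude $\langle X\rangle=\chi(G)$ and then write $\gamma=x_1\cdots x_{2m}$ with $x_i\in X$; but $\langle X\rangle$ is by definition the \emph{closed} subgroup of $\chi(G)$ generated by $X$, and for infinite $G$ the algebraic subgroup generated by $X$ is in general a proper dense subgroup, so topological generation does not yield a finite product. Concretely, for $(X,G)=(X_{\R(x)},G_{\R(x)})$ the restriction of a finite product of orderings to the linear polynomials $x-s$ is a function of $s$ with only finitely many sign changes, so most characters $\gamma$ with $\gamma(-1)=1$ are not finite products of orderings at all. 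Thus the existence of \emph{some} finite expression for $\gamma$ --- which your entire extremal argument presupposes --- is precisely the hard content of the theorem and must be deduced from the hypothesis that $(X_0,G_0)$ is a space of orderings. The paper does this directly: if the restriction $r:X\to X_0$ is not injective one gets $\gamma\in X^2$ at once; if it is injective, one fixes $g\in G\setminus G_0$, checks that the continuous function $\phi(r(\sigma))=\sigma(g)$ on $X_0$ is not represented by any element of $G_0$, and invokes the fan criterion \cite[Theorem 3.2.2]{M3} to produce a $4$-element fan $V$ in $X_0$ with $\prod_{\alpha\in V}\phi(\alpha)\ne 1$; the product of the preimages of the elements of $V$ is then a nontrivial element of $X^4$ which kills $G_0$, hence equals $\gamma$. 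Your proposal never uses the space-of-orderings hypothesis to manufacture such a fan; it only invokes it in a length-reduction phase that is never reached.

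Moreover, the structural lemma that phase relies on --- that a space of orderings admits no minimal dependency of length at least $6$ among distinct orderings --- is false. Let $(\overline{X},\overline{G})$ be the SAP space with three orderings $\eta_1,\eta_2,\eta_3$ and let $(X,G)=(\overline{X},\overline{G})\times K$ with $K\cong(\Z/2\Z)^2$ and $\chi(K)=\{1,a,b,ab\}$. The six distinct orderings $(\eta_1,1),(\eta_1,a),(\eta_2,1),(\eta_2,b),(\eta_3,1),(\eta_3,ab)$ have product $(1,\,a\cdot b\cdot ab)=1$, yet no proper subproduct is trivial: a trivial subproduct of size $4$ would have to take both orderings lying over two of the $\eta_i$, and the $K$-components then multiply to $ab$, $b$ or $a$. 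So your induction through group extensions cannot close; the group part can contribute minimal relations that mix several fibres and are not fans. The six-ordering example preceding the theorem only shows that one particular length-$6$ relation is incompatible with being a space of orderings, not that such relations cannot occur inside one.
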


Here $X^k := \{ \prod_{i=1}^k \sigma_i : \sigma_i \in X, i=1,\dots,k\}.$ Since the $\sigma_i$ are not required to be distinct, $\{ 1 \} \subseteq X^2 \subseteq X^4$. If $(X,G)$ is a fan then $X^4 = X^2 = \{ \gamma \in \chi(G) : \gamma(-1) = 1\}$.

\begin{proof} Suppose first that the restriction map $r : X \rightarrow X_0$ is not injective, so there exist $\sigma,\tau \in X$, $\sigma \ne \tau$, $r(\sigma)=r(\tau)$. In this case, $G_0 = \ker(\sigma\tau)$, so $\gamma = \sigma\tau \in X^2$. Suppose next that $r$ is injective. Since $r$ is continuous and injective and $X$ is compact, $r$ is a homeomorphism. Fix $g\in G$, $g\notin G_0$, and define $\phi : X_0 \rightarrow \{\pm 1\}$ by $\phi(r(\sigma)) = \sigma(g)$. $\phi$ is well-defined and continuous.

\smallskip

Claim: $\phi$ is not in the image of $G_0$ under the natural injection $\hat{} : G_0 \rightarrow \operatorname{Cont}(X_0, \{\pm 1\})$. For suppose $\phi = \hat{h}$, $h\in G_0$. Then, for any $\sigma\in X$, $\sigma(h) = r(\sigma)(h) = \hat{h}(r(\sigma))= \phi(r(\sigma)) = \sigma(g)$, so $g=h \in G_0$, contradicting $g \notin G_0$.

\smallskip

%On the other hand, since $g \notin G_0$, $\phi$ is not in the image of $G_0$ under the natural injection $\hat{} : G_0 \rightarrow \operatorname{Cont}(X_0, \{\pm 1\})$.
Suppose now that $(X_0,G_0)$ is a space of orderings. By the claim and \cite[Theorem 3.2.2]{M3} there exists a 4-element fan $V$ in $X_0$ such that $\prod_{\alpha \in V} \phi(\alpha) \ne 1$. The character $\gamma' :=\prod_{\alpha \in V} r^{-1}(\alpha) \in X^4$ is $\ne 1$ (because $\gamma'(g)=\prod_{\alpha \in V} \phi(\alpha) \ne 1$) but the restriction of $\gamma'$ to $G_0$ is equal to 1 (because $V$ is a fan). Thus $\gamma = \gamma' \in X^4$, as required.
\end{proof}

\begin{theorem}\label{sap} If the space of orderings $(X,G)$ is  SAP, then the necessary condition in Theorem \ref{main} is also sufficient.
\end{theorem}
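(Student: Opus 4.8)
My plan is to verify directly that the quotient structure $(X_0,G_0)$ satisfies the axioms of a space of orderings, the decisive one being the character axiom (1). Throughout I would exploit the identity $D(1,d)=\{c\in G:H(d)\subseteq H(c)\}=H(d)^{\perp}$, valid in any space of orderings, which turns the value-set condition into a statement about the subgroups $H(d)^{\perp}$ and lets me reduce admissibility of a character to the single requirement that $D(1,d)\subseteq\ker(\cdot)$ for all $d$ in its kernel. As a first step I would split according to whether the restriction map $r:X\to X_0$ is injective. Exactly as in the proof of Theorem \ref{main}, $r$ fails to be injective precisely when $\gamma=\sigma\tau\in X^2$; in that case $r$ merely identifies each pair $\{\rho,\gamma\rho\}$ lying in $X$, and since $G_0$ consists of the $\gamma$-invariant elements of $G$ one checks directly that the identification space is again a space of orderings (this case needs no SAP hypothesis). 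The substance is therefore the injective case $\gamma\in X^4\setminus X^2$, where $r$ is a homeomorphism and I may identify $(X_0,G_0)$ with $(X,G_0)$.

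In the injective case the essential point is axiom (1): given $y\in\chi(G_0)$ with $y(-1)=-1$ such that $H(d)^{\perp}\cap G_0\subseteq\ker(y)$ for every $d\in\ker(y)$, I must produce $\sigma\in X$ with $\sigma|_{G_0}=y$. Since $[G:G_0]=2$, the character $y$ has exactly two extensions to $\chi(G)$, namely $x$ and $x\gamma$, both sending $-1$ to $-1$; it suffices to show that one of them is admissible in $(X,G)$, for then it lies in $X$ by axiom (1) for the space $(X,G)$. Writing $K=\ker(y)=\ker(x)\cap G_0$, the group $G$ decomposes into the four cosets of $K$ labelled by the signs of $\gamma$ and $x$, and the hypothesis on $y$ says precisely that for $d\in K$ the subgroup $H(d)^{\perp}$ avoids the coset on which $\gamma=1$, $x=-1$. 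A short computation in $G/K\cong(\Z/2)^2$ then shows that for each such $d$ the subgroup $H(d)^{\perp}$ is contained either in $\ker(x)$ or in $\ker(x\gamma)$, so the only way both extensions can fail admissibility is that some $d$ forces $x$ while some other $d'$ forces $x\gamma$.

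The heart of the argument, and the step I expect to be the main obstacle, is to rule this out using SAP together with $\gamma\in X^4$. Here SAP enters through the fact that a finite intersection of Harrison sets is again a Harrison set: choosing $e\in G$ with $H(e)=H(d)\cap H(d')$ gives $H(d)^{\perp}\cdot H(d')^{\perp}\subseteq(H(d)\cap H(d'))^{\perp}=H(e)^{\perp}$, so the two conflicting witnesses can be multiplied into a single element of $H(e)^{\perp}$ landing in the forbidden coset $\gamma=1,\,x=-1$. To turn this into a contradiction with the admissibility of $y$ one needs $e$ (or a suitable replacement) to lie in $K$, and it is exactly this coset bookkeeping that consumes the hypothesis $\gamma\in X^4$: writing $\gamma=\sigma_1\sigma_2\sigma_3\sigma_4$, the relation $\bar\sigma_1\bar\sigma_2\bar\sigma_3\bar\sigma_4=1$ created in $X_0$ means that the degeneracy introduced by passing to $G_0$ is a genuine four-element fan rather than a longer, inconsistent relation (the phenomenon that makes the six-ordering example above fail), and this is what allows the coset computation to close.

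Finally I would check axiom (2) by a similar but lighter argument, using that in the SAP setting every value set $D(a,b)=a\,H(ab)^{\perp}$ is a coset of one of the subgroups $H(ab)^{\perp}$ and that these subgroups interact well with the index-two subgroup $G_0$; the compactness of $X_0$, the presence of $-1$, and the separation of points by $G_0$ are all immediate in the injective case (separation because $\sigma|_{G_0}=\tau|_{G_0}$ forces $\sigma\tau\in\{1,\gamma\}$, impossible for distinct $\sigma,\tau\in X$ when $\gamma\notin X^2$). The real difficulty, and where I would spend essentially all of the effort, is the character-axiom step of the third paragraph, namely explaining why $\gamma\in X^4$—and not merely $\gamma\in X^2$—is the precise amount of degeneracy that SAP is able to absorb.
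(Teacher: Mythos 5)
Your proposal has two genuine gaps, and one of them is an outright error. In the non-injective case you assert that when $\gamma=\sigma\tau\in X^2$ ``one checks directly that the identification space is again a space of orderings (this case needs no SAP hypothesis).'' This is false: Example \ref{ex2_6}(1) of the paper takes $(X,G)$ to be the direct sum of two $4$-element fans and $\gamma=\sigma_1\sigma_4\in X^2$, and the resulting quotient structure is \emph{not} a space of orderings. So $\gamma\in X^2$ alone does not suffice; SAP (or, per Theorem \ref{nec cond}, a restriction on the connected components of $\sigma$ and $\tau$) is genuinely needed in this case, and ``checks directly'' conceals exactly the content that has to be supplied.

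In the injective case, the step you yourself call the heart of the argument --- ruling out that some $d\in\ker(y)$ forces $D(1,d)\subseteq\ker(x)$ while another $d'$ forces $D(1,d')\subseteq\ker(x\gamma)$, using $\gamma\in X^4$ --- is never actually carried out; your third paragraph locates the difficulty rather than resolving it. There is also an unaddressed point in the reduction itself: to conclude that $x$ (or $x\gamma$) is admissible in $(X,G)$ you must control $D(1,d)$ for \emph{all} $d\in\ker(x)$, including $d\notin G_0$, about which the admissibility hypothesis on $y$ says nothing. Axiom (2) is likewise only waved at. For comparison, the paper's proof takes a different and much shorter route: SAP is used to compute $G_0$ inside $\operatorname{Cont}(X_0,\{\pm 1\})$ explicitly (it is all of $\operatorname{Cont}(X_0,\{\pm 1\})$ when $\gamma\in X^2$, and the index-two subgroup $\{\phi:\prod_{i=1}^4\phi(r(\sigma_i))=1\}$ when $\gamma\in X^4\setminus X^2$), after which $(X_0,G_0)$ is recognized as the space of global sections of a sheaf of spaces of orderings whose stalks are singletons except possibly for one $4$-element fan, and Corollary \ref{sheafcor} applies. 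A direct axiom-checking proof along your lines would essentially have to reprove that special case of the sheaf theorem.
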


\begin{proof}
Suppose first that $\gamma \in X^2$, say $\gamma = \sigma_1\sigma_2$, $\sigma_1,\sigma_2 \in X$. Let $\phi : X_0 \rightarrow \{\pm 1\}$ be continuous. Then $\phi\circ r : X \rightarrow \{\pm 1\}$ is continuous. Since $(X,G)$ is SAP, the natural injection $\hat{} : G \hookrightarrow \operatorname{Cont}(X, \{ \pm 1\})$ is surjective, i.e., $\phi\circ r = \hat{g}$ for some $g\in G$. Then $\sigma_1(g) = \phi(r(\sigma_1)) = \phi(r(\sigma_2)) = \sigma_2(g)$, so $g\in G_0$. This implies that the natural injection $G_0 \hookrightarrow \operatorname{Cont}(X_0,\{\pm 1\})$ is an isomorphism. Suppose next that $\gamma \in X^4$, $\gamma \notin X^2$, say $\gamma = \prod_{i=1}^4 \sigma_i$, $\sigma_i\in X$, $i=1,\dots,4$. In this case one sees, by a similar argument, that the natural injection $G_0 \hookrightarrow \operatorname{Cont}(X_0,\{\pm 1\})$ identifies $G_0$ with $$\{ \phi \in \operatorname{Cont}(X_0,\{\pm 1\}) : \prod_{i=1}^4 \phi(r(\sigma_i))=1\}.$$ In either case, $(X_0,G_0)$ can be viewed as the space of global sections of a sheaf of spaces of orderings as defined in \cite[Chapter 8]{M1}, so $(X_0,G_0)$ is a space of orderings by Theorem \ref{sheaf}, more specifically, by Corollary \ref{sheafcor}.
Note: If $\gamma \in X^2$ all of the stalks are singleton spaces; if $\gamma \in X^4\backslash X^2$ one of the stalks is a 4-element fan and the rest are singleton spaces. %Note: The results in \cite{M1} are phrased in terms of reduced Witt rings, not spaces of orderings, but the two categories are equivalent.
\end{proof}

\begin{example} \label{rofx} Theorems \ref{main} and \ref{sap} provide a convenient description of quotients of index two of the space of orderings of the field $\R(x)$, or, more generally, of the space of orderings of any formally real function field of transcendence degree $1$ over a real closed field.  %$R(x)$, for any real closed field.
This is because spaces of orderings of this sort are SAP. %Recall also that orderings of the field $\R(x)$ are easily described geometrically. %Note that, since the orderings of the field $\R(x)$ can be easily described geometrically, Theorems \ref{main} and \ref{sap} can be spelled out as a certain ``positivity condition'' that has to be satisfied by elements of the group $G_0$. %Since the description of the orderings on $\R(x)$ has a nice geometric flavor, the condition of Theorem \ref{main} reads as the following ``positivity condition'': a quotient structure $(X_0, G_0)$ of the space of orderings $(X_{\R(x)}, G_{\R(x)})$ of index 2 is a quotient space if and only if $G_0$ consists of elements positive at an even number of a given quadruple of infinitesimals.
\end{example}

\begin{example} \label{ex2_6} The condition of Theorem \ref{sap} fails to be sufficient if the stability index of the space $(X, G)$ is greater than 1.
\begin{enumerate}
\item Consider the space $(X, G)$, where $$X = \{\sigma_1, \sigma_2, \sigma_3, \sigma_1\sigma_2\sigma_3, \sigma_4, \sigma_5, \sigma_6, \sigma_4\sigma_5\sigma_6\},$$ with $|G| = 2^6.$ This is the direct sum of two four element fans. Let $\gamma = \sigma_1 \sigma_4$. The quotient structure $(X_0, G_0)$ with $G_0 = \ker \gamma$ is not a quotient space.
\item Consider the space $(X, G)$, where $$X = \{\sigma_1, \sigma_2, \sigma_3, \sigma_1\sigma_2\sigma_3, \sigma_4, \sigma_5, \sigma_6\},$$ with $|G|=2^6.$ This is the direct sum of a four element fan and three singleton spaces. Let $\gamma = \sigma_1\sigma_4 \sigma_5 \sigma_6$. The quotient structure $(X_0, G_0)$ with $G_0 = \ker \gamma$ is not a quotient space.
\item Consider the space $(X, G)$, where $$X = \{\sigma_1, \sigma_2, \sigma_3, \sigma_4, \sigma_1\sigma_3\sigma_4, \sigma_2\sigma_3\sigma_4, \sigma_5, \sigma_6\},$$ with $|G|=2^6.$ This is the direct sum of a connected space of six elements and two singleton spaces. Let $\gamma = \sigma_1\sigma_2 \sigma_5 \sigma_6$. The quotient structure $(X_0, G_0)$ with $G_0 = \ker \gamma$ is not a quotient space.
\item Consider the space $(X, G)$, where $$X = \{\sigma_1, \sigma_2, \sigma_3, \sigma_4, \sigma_1\sigma_3\sigma_4, \sigma_2\sigma_3\sigma_4, \sigma_5, \sigma_6, \sigma_7, \sigma_8, \sigma_5\sigma_7\sigma_8, \sigma_6\sigma_7\sigma_8\},$$ with $|G|=2^8.$ This is the direct sum of two connected spaces, each consisting of six elements. Let $\gamma = \sigma_1\sigma_2 \sigma_5 \sigma_6$. The quotient structure $(X_0, G_0)$ with $G_0 = \ker \gamma$ is not a quotient space.
\end{enumerate}
Details of proofs are left to the reader. In each case one uses the structure theorem \cite[Theorem 4.2.2]{M3}, \cite[Theorem 4.10]{M-2} for the finite spaces of orderings $(X_0,G_0)$ and shows that the resulting quotient structure is constructed in a way contradicting the theorem.
\end{example}

To simplify things \it we assume from now on that the space of orderings $(X,G)$ contains no infinite fans. \rm This is the case, for example, if the stability index of $(X,G)$ is finite. Recall that, for $\delta \in \chi(G)$, $X_{\delta} := \{ \sigma \in X : \sigma\delta \in X\} = X\cap \delta X$. %The connected components of $(X,G)$ are defined in \cite{M0}.
Since $(X,G)$ has no infinite fans, every connected component of $(X,G)$ is either singleton or has the form $X_{\delta}$ for some $\delta \in \chi(G)$, $\delta \ne 1$, $|X_{\delta}| \ge 4$ \cite[Theorem 4.6.1]{M3},  \cite[Theorem 2.6]{M0}.

The requirement that $\gamma \in X^4$ can be substantially refined as follows:

\begin{theorem} \label{nec cond} A necessary condition for $(X_0,G_0)$ to be a quotient of $(X,G)$ is that $\gamma= \prod_{i=1}^k \sigma_i$, $\sigma_i \in X$, $k = 2$ or
$k=4$ and $\gamma \notin X^2$, and, in the case where not all $\sigma_i$ are in the same connected component of $(X,G)$ and the connected components of the $\sigma_i$ in $(X,G)$ are not all singleton, either $k=2$ and exactly one of the connected components of the $\sigma_i$ is not singleton, or $k=4$, $\gamma \notin X^2$ and, after reindexing suitably, the connected component of $\sigma_3$ and $\sigma_4$ is $X_{\sigma_3\sigma_4}$ and either the connected component of $\sigma_1$ and $\sigma_2$ is $X_{\sigma_1\sigma_2}$ or
the connected component of $\sigma_i$ is singleton for $i = 1,2$.
\end{theorem}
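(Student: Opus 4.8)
The plan is to assume that $(X_0,G_0)$ is a quotient space of $(X,G)$ and to read off the stated constraints from the structure theorem for finite spaces, after reducing to a finite direct sum of connected components. By Theorem \ref{main} I may write $\gamma=\prod_{i=1}^k\sigma_i$ with $\sigma_i\in X$ of minimal length; since $\gamma(-1)=1$ and $\gamma\ne 1$ (as $[G:G_0]=2$), this forces $k\in\{2,4\}$, with $\gamma\notin X^2$ when $k=4$. This is the first assertion, so the real content lies in the clauses about connected components, which I would establish under the stated hypotheses that the $\sigma_i$ do not all lie in one component and are not all in singleton components.

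To use the finite structure theory I would first pass to a finite subspace. Because $(X,G)$ has no infinite fans, all fan relations among the finitely many $\sigma_i$ --- and in particular whether a given $\sigma_i$ lies in a singleton component and whether a component equals some $X_{\sigma_i\sigma_j}$ --- are already visible in a finite subspace. I would choose such a finite subspace $(Y,G/Y^\perp)$ containing every $\sigma_i$ and with $\gamma\in\langle Y\rangle$; by Theorem \ref{basicstuff}(i) the induced quotient is again a quotient space, so I may assume $X$ is finite. Writing $(X,G)=\coprod_l(C_l,G_l)$ as the direct sum of its connected components, set $\gamma_l:=\gamma|_{G_l}=\prod_{\sigma_i\in C_l}\sigma_i$ and $n_l:=\#\{i:\sigma_i\in C_l\}$. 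The summands with $\gamma_l=1$ split off as direct summands of $(X_0,G_0)$, so I may assume $\gamma_l\ne 1$, hence $n_l\ge 1$, for every $l$. Then $\sum_l n_l=k\le 4$; the hypothesis gives at least two components; a singleton $C_l$ forces $n_l$ odd; and minimality of $k$ prevents the local product $\gamma_l$ from reducing to a shorter product of orderings inside $C_l$.

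The core is then a short case analysis over the admissible partitions $\{n_l\}$ of $k$ into at least two parts, namely $(1,1)$ for $k=2$ and $(2,2),(1,3),(1,1,2),(1,1,1,1)$ for $k=4$. For each partition I would determine the connected components of $(X_0,G_0)$ from those of $(X,G)$ together with the identifications made by $r:X\to X_0$: since $r(\sigma)=r(\tau)$ iff $\sigma\tau\in\{1,\gamma\}$, the map $r$ merges exactly the pairs $\{\sigma,\sigma\gamma\}$ with $\sigma\in X_\gamma$, and in the cross-component situation one checks that $X_\gamma$ is small and explicitly computable. Testing the resulting finite structure against the structure theorem \cite[Theorem 4.2.2]{M3}, \cite[Theorem 4.10]{M-2} --- every finite space of orderings is a direct sum of singletons and proper group extensions --- the configurations that survive are exactly $(1,1)$ with one component singleton, $(2,2)$ with each pair filling a component equal to $X_{\sigma_i\sigma_j}$, and $(1,1,2)$ with two singletons together with a component equal to $X_{\sigma_3\sigma_4}$; these are the allowed cases in the statement. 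The remaining configurations are ruled out by producing, in general form, the obstructions already seen in Example \ref{ex2_6}(1)--(4).

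I expect the main obstacle to be this structural step: turning the bare description of which points $r$ merges into a precise description of the connected components of $(X_0,G_0)$, and then certifying via the structure theorem that the merged object is not a direct sum of singletons and proper group extensions. The most delicate point is distinguishing a two-element block $\{\sigma_i,\sigma_j\}$ that exactly fills a component --- equivalently, for which that component equals $X_{\sigma_i\sigma_j}$, as happens for fans --- from one embedded in a larger or non-fan component (such as a six-element connected space), where $X_{\sigma_i\sigma_j}$ is a proper subset and the quotient fails to be a space; it is precisely this dichotomy that produces the $X_{\sigma_i\sigma_j}$-clauses. Secondary care is needed to justify the reduction to a finite subspace without losing the singleton-versus-$X_{\sigma_i\sigma_j}$ information about the components of the $\sigma_i$.
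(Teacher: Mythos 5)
Your overall strategy --- reduce to a finite space and run the statement through the structure theorem for finite spaces of orderings --- is genuinely different from the paper's argument, and it has two gaps that are not merely ``secondary care.''

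First, the reduction to a finite subspace does not work as stated, and the paper deliberately avoids it. The conclusion of Theorem \ref{nec cond} is about the connected components of $(X,G)$ itself and about the sets $X_{\sigma_i\sigma_j}$ computed in $X$; these components may be infinite (the standing hypothesis excludes infinite fans, not infinite components). Passing to a finite subspace $(Y,G/Y^{\perp})$ distorts exactly the data you need: a four-element subset $\{\tau_1,\dots,\tau_4\}$ of $Y$ is a fan in the subspace as soon as $\tau_1\tau_2\tau_3\tau_4\in Y^{\perp}$, so a subspace can acquire fans, connections, and elements of $Y_{\sigma_3\sigma_4}$ that do not exist in $X$. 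Thus ``the component of $\sigma_3,\sigma_4$ equals $X_{\sigma_3\sigma_4}$'' is neither inherited by nor detectable in an arbitrary finite subspace, and turning a failure of the conditions in $X$ into a failure in some finite $Y$ (so that Theorem \ref{basicstuff}(i) applies) is the hard part of your plan, not a footnote to it. The paper instead keeps the reduction minimal --- it only passes to the union $Y$ of the connected components meeting $\{\sigma_1,\dots,\sigma_k\}$, which leaves all component data intact --- and then works with this possibly infinite space directly.

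Second, your method for analyzing the surviving partitions is under-powered even in the finite case. You propose to read off the components of $(X_0,G_0)$ from the point identifications made by $r$ (the pairs $\{\sigma,\sigma\gamma\}$), but the components of the quotient are governed by the \emph{new four-element fans} it acquires --- quadruples whose product is $1$ on $G_0$ but not on $G$ --- not by which points get merged (indeed $r$ is injective on each component whenever $\gamma\notin\langle Z_j\rangle$). The paper's engine is precisely this: using \cite[Lemma 4.6]{M-2} repeatedly it shows that if $(X_0,G_0)$ is a space of orderings then it is \emph{connected} with $X_0=(X_0)_{\mu_0}$ for some $\mu_0\ne 1$, which lifts to the covering $X=X_{\mu}\cup X_{\gamma\mu}$; the short case analysis on $|X_{\mu}|$ and $|X_{\gamma\mu}|$, compared against the direct product decomposition $\chi(G)=\prod_j\langle Z_j\rangle$, then yields exactly the allowed configurations. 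Your proposal asserts the outcome of the case analysis (``the configurations that survive are exactly\dots'') without supplying the structural fact that drives it. To repair the argument you would either need to prove this connectivity statement yourself, or carry out the partition-by-partition elimination in full, including the delicate $(2,2)$ and $(1,1,2)$ cases where one must show the components are exactly $X_{\sigma_1\sigma_2}$ and $X_{\sigma_3\sigma_4}$ and not merely containers of these sets.
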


\begin{proof} Denote by $Y$ the union of the connected components of $(X,G)$ which meet the set $\{ \sigma_1,\dots,\sigma_k\}$. According to \cite[Theorem 3.6]{M2} $Y$, more precisely $(Y,G/\Delta)$ where $\Delta := Y^{\perp}$, is a subspace of $(X,G)$. Denote by $Y_0$ the set of restrictions of elements of $Y$ to $G_0$. If we assume that $(X_0,G_0)$ is a quotient space of $(X,G)$ then %$(Y_0,G_0/\Delta)$ is a subspace of the space of orderings $(X_0,G_0)$ so it is itself a space of orderings, i.e.,
$(Y_0,G_0/\Delta)$ is a quotient space of $(Y,G/\Delta)$, by Theorem \ref{basicstuff}. In this way, we are reduced to the case where $X=Y$, i.e., each connected component of $(X,G)$ meets the set $\{ \sigma_1,\dots,\sigma_k\}$.

Denote by $(Z_j,G/Z_j^{\perp})$, $j\in J$ the connected components of $(X,G)$. Each $Z_j$ is singleton or has the form $X_{\delta}$, $\delta \ne 1$, $|X_{\delta}|\ge 4$, and  $Z_j \cap \{ \sigma_1,\dots,\sigma_k\} \ne \emptyset$ for each $j$, so $|J|\le k$. By hypothesis, $2 \le |J|$ and not all $Z_j$ are singleton. According to \cite[Corollary 7.5]{M1}, $(X,G)$ is the direct sum of the $(Z_j,G/Z_j^{\perp})$, $j \in J$. In particular, $\chi(G) = \prod_{j\in J} \langle Z_j \rangle$ (direct product of groups), where $\langle Z_j \rangle$ is the closed subgroup of $\chi(G)$ generated by $Z_j$. Since $|J|\ge 2$ this implies in particular that $\gamma \notin \langle Z_j \rangle$ for each $j$.

The restriction map $r: X \rightarrow X_0$ is injective on each $Z_j$. This is clear if $Z_j$ is singleton. If $Z_j = X_{\delta}$, $\delta \ne 1$, $|X_{\delta}| \ge 4$, then injectivity follows from the fact that $\gamma \notin \langle Z_j \rangle$. We also see in this latter case that $r(Z_j) \subseteq (X_0)_{\delta_0}$, where $\delta_0$ denotes the restriction of $\delta$ to $G_0$, and $\delta_0 \ne 1$ (because $\gamma \notin \langle Z_j \rangle$). It follows, using \cite[Lemma 4.6]{M-2} repeatedly (see \cite[Remark 2.1]{M0}), that the space of orderings $(X_0,G_0)$ is connected and, moreover, that there exists $\mu_0 \in \chi(G_0)$, $\mu_0\ne 1$ such that $X_0 = (X_0)_{\mu_0}$. This implies in turn that $X = X_{\mu}\cup X_{\gamma\mu}$ where $\mu$ is some fixed extension of $\mu_0$ to a character on $G$.
Since $|X|\ge 5$ it follows that at least one of $|X_{\mu}|, |X_{\gamma\mu}|$ is $\ge 4$. Reindexing we can assume $|X_{\gamma\mu}|\ge 4$.

If $|X_{\mu}|$ is also $\ge 4$, then, since $X$ has at least $2$ connected components, $X_{\mu}\cap X_{\gamma\mu} = \emptyset$ and $X_{\mu}$ and $X_{\gamma\mu}$ are the connected components of $X$, so $\chi(G) = \langle X_{\mu}\rangle \times \langle X_{\gamma\mu}\rangle$. If $k=2$ then, after reindexing, $\sigma_1 \in X_{\mu}$, $\sigma_2 \in X_{\gamma\mu}$ and since the two decompositions $\gamma = (\mu)(\gamma\mu)$ and $\gamma = (\sigma_1)(\sigma_2)$ must be the same (because the product is direct), $\mu= \sigma_1$ and $\gamma\mu=\sigma_2$. Since $\sigma_1(-1)=-1$, $\mu(-1)=1$, this is not possible. If $k=4$, then, arguing as before with the two decompositions of $\gamma$, we see that, after reindexing suitably, $\sigma_1,\sigma_2 \in X_{\mu}$, $\sigma_3,\sigma_4 \in X_{\gamma\mu}$, $\mu= \sigma_1\sigma_2$, and $\gamma\mu=\sigma_3\sigma_4$.

This leaves the case $|X_{\mu}|=2$, $|X_{\gamma\mu}|\ge 4$. If $k=2$ then $X$ has two components, one singleton and one equal to $X_{\gamma\mu}$. Suppose now that $k=4$, $\gamma \notin X^2$. Reindexing, we can suppose $\sigma_3,\sigma_4 \in X_{\gamma\mu}$. There are two subcases: either $X_{\mu}\cap X_{\gamma\mu} = \emptyset$ or $X_{\mu}\cap X_{\gamma\mu} \ne \emptyset$. Suppose first that $X_{\mu}\cap X_{\gamma\mu}= \emptyset$. Then $X_{\mu} = \{\sigma_1,\sigma_2\}$, $\mu = \sigma_1\sigma_2$, $\gamma\mu = \sigma_3\sigma_4$. In this case the connected components are $\{ \sigma_1\}$, $\{ \sigma_2\}$ and $X_{\sigma_3\sigma_4}$. Suppose now that $X_{\mu}\cap X_{\gamma\mu} \ne \emptyset$. Reindexing we may assume $X_{\mu} = \{ \sigma_1,\sigma_1\mu\}$, $\sigma_1\mu \in X_{\gamma\mu}$. Then $(\sigma_1\mu)(\gamma\mu) = \sigma_1\gamma = \sigma_2\sigma_3\sigma_4 \in X$, contradicting $\gamma \notin X^2$. Thus this case cannot occur.
\end{proof}

We remark that the quotient structures appearing in Example \ref{ex2_6} are precisely those for which the conditions of Theorem \ref{nec cond} fail to be satisfied.

It is natural to wonder if the necessary conditions on Theorem \ref{nec cond} are sufficient when $(X,G)$ has stability index two. We are unable to prove this in general. We are however able to prove the following:

\begin{theorem} \label{beyond sap} If $(X,G)$ has stability index two and just finitely many non-singleton connected components, then the necessary conditions of Theorem \ref{nec cond} are sufficient.
\end{theorem}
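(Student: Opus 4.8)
The plan is to show, under the hypotheses of Theorem \ref{nec cond}, that $(X_0,G_0)$ is either a direct sum of spaces of orderings or a group extension of an SAP space, both of which are known to be spaces of orderings. First I would use the decomposition of $(X,G)$ into its connected components (available here precisely because only finitely many are non-singleton), writing the non-singleton ones as $Z_j=X_{\delta_j}$ and setting $G_j:=G/Z_j^{\perp}$. Since $\stab(X,G)=2$, a non-singleton connected component contains a four-element fan and hence has stability index exactly $2$; by the structure theory of connected components each such $Z_j$ is a group extension $(\bar Z_j,\bar G_j)\times\langle\delta_j\rangle$ of an SAP space $(\bar Z_j,\bar G_j)$, where $\delta_j$ is its fan direction and $\bar G_j=\ker(\delta_j|_{G_j})$. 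Now let $Y$ be the union of the components meeting $\{\sigma_1,\dots,\sigma_k\}$ and $U$ the union of the rest. Because $\gamma$ is trivial on every untouched component, $G=\bigoplus_j G_j$ gives $G_0=\ker(\gamma|_{\bigoplus_{j\in Y}G_j})\oplus\bigoplus_{j\in U}G_j$ and $X_0=Y_0\sqcup U$, so $(X_0,G_0)$ is the direct sum of the quotient of the touched part and the untouched components. As direct sums of spaces of orderings are spaces of orderings, I may assume $X=Y$, i.e. every component meets $\{\sigma_1,\dots,\sigma_k\}$.

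With $X=Y$, Theorem \ref{nec cond} leaves only a few configurations. If all components are singleton, then $(X,G)$ is SAP and the claim is Theorem \ref{sap}. If all $\sigma_i$ lie in one component, that component is the only one present and I am reduced to the connected case: writing $(X,G)=(\bar Z,\bar G)\times\langle\delta\rangle$ and putting $\gamma'=\gamma|_{\bar G}$, Theorem \ref{basicstuff}(ii) shows that $(X_0,G_0)$ is a space of orderings when $\gamma'=1$, and otherwise reduces the problem to the quotient of the SAP space $(\bar Z,\bar G)$ by $\gamma'$; since the $\sigma_i$ restrict to elements of $\bar Z$, one has $\gamma'\in\bar Z^4$, so Theorem \ref{sap} applies. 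The remaining $k=2$ configuration, one non-singleton component $Z_1$ and one singleton $\{\sigma_2\}$ with $\gamma=\sigma_1\sigma_2$ and $\sigma_1\in Z_1$, is handled directly: the only identification made by $r$ sends $\sigma_2$ to $\sigma_1$, the restriction $G_0\to G_1$ is an isomorphism, and $(X_0,G_0)\cong(Z_1,G_1)$.

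The substantive cases are the two $k=4$ \emph{merging} configurations, where $\gamma=\delta_1\delta_2$ with $\delta_1=\sigma_1\sigma_2$, $\delta_2=\sigma_3\sigma_4$, each $\delta_j$ the fan direction of a component $X_{\delta_j}$ (in the second configuration $X_{\delta_1}$ is replaced by two singletons $\{\sigma_1\},\{\sigma_2\}$). Here I would set $\bar G_0:=\ker\delta_1\cap\ker\delta_2$, an index-$2$ subgroup of $G_0$. A short computation with the connectivity relation, using $(\bar G_0)^{\perp}=\langle\delta_1,\delta_2\rangle$ and $\langle Z_1\rangle\cap\langle Z_2\rangle=\{1\}$, shows that restricting to $\bar G_0$ collapses each $X_{\delta_j}$ to its SAP base $\bar Z_j$ (and, in the second configuration, merges the two singletons), so that $(X_0|_{\bar G_0},\bar G_0)=(\bar Z_1,\bar G_1)\sqcup(\bar Z_2,\bar G_2)$ is a direct sum of SAP spaces, hence an SAP space of orderings. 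I would then prove that $(X_0,G_0)$ is the group extension of this base by a complement $\langle e\rangle$ of $\bar G_0$ in $G_0$: the inclusion $X_0\subseteq\{x\in\chi(G_0):x|_{\bar G_0}\in X_0|_{\bar G_0}\}$ is immediate, and for the reverse inclusion I use that $\delta_1(e)=-1$, so the two lifts $\tau,\tau\delta_1$ of any base ordering already realize both extensions $x(e)=\pm1$. Thus $(X_0,G_0)=(X_0|_{\bar G_0},\bar G_0)\times\langle e\rangle$, a group extension of an SAP space, and therefore a space of orderings.

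I expect the main obstacle to be exactly this last, ``fullness'' step in the merging cases: one must rule out phantom characters of $G_0$ that restrict to base orderings yet do not arise from points of $X$. This is where the precise conclusion of Theorem \ref{nec cond} is indispensable, since it guarantees that $\gamma$ factors as a product $\delta_1\delta_2$ of genuine fan directions of the components it meets, so that the merge respects the group-extension structure and $X_0=(X_0)_{\mu_0}$ for $\mu_0=\delta_1|_{G_0}=\delta_2|_{G_0}$. When $\gamma$ instead links components along non-fan directions, as in the configurations of Example \ref{ex2_6}, the base fails to be SAP or the lifting fails to be surjective and the quotient structure is not a space of orderings; checking that the conditions of Theorem \ref{nec cond} are exactly what makes the base SAP and the lifting onto is the heart of the argument.
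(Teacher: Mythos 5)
Your case analysis after the reduction to ``every connected component meets $\{\sigma_1,\dots,\sigma_k\}$'' is essentially the paper's: all components singleton gives the SAP case; the connected case is handled by writing the component as a group extension of an SAP space by a cyclic group of order two and invoking Theorem \ref{basicstuff}(ii) together with Theorem \ref{sap}; and the two $k=4$ merging configurations produce a group extension by a group of order two of the direct sum of the residue spaces of $X_{\sigma_1\sigma_2}$ and $X_{\sigma_3\sigma_4}$. Your verification that this extension is full (via the two lifts $\tau$, $\tau\delta_1$ of a base ordering) is correct and supplies detail the paper omits.

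The genuine gap is in your first reduction. The hypothesis permits infinitely many \emph{singleton} connected components, and in that situation $G$ is not $\bigoplus_j G_j$ and $(X,G)$ is not the direct sum of the touched part $Y$ and the untouched part $U$: direct sums have finitely many summands by definition and, worse, $U$ need not even be a subspace, because singleton components can accumulate on points of $Y$ (this already happens inside subspaces of $X_{\Q(x)}$, where the archimedean orderings $\sigma_r$ accumulate on the sets $X_p$). Consequently $(X_0,G_0)$ is not in general the direct sum of $(Y_0,G_0/\Delta)$ and the untouched components; it is the space of global sections of a sheaf whose stalks are the untouched components together with the merged space $(Y_0,G_0/\Delta)$. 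The paper closes exactly this hole with Corollary \ref{sheafcor}: all but finitely many stalks are singletons, hence SAP, so condition (4) of the sheaf theorem is automatic, and the global-sections space is a space of orderings. Without that step, or an equivalent argument handling the topology of infinitely many singleton components, your reduction to $X=Y$ does not go through. If $(X,G)$ has only finitely many connected components in total, your argument is complete and coincides with the paper's.
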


\begin{proof} An application of Corollary \ref{sheafcor} allows us to reduce to the case where $(X,G)$ has just finitely many connected components. By assumption, $\gamma = \prod_{i=1}^k \sigma_i$, $\sigma_i \in X$, $k=2$ or $k=4$ and $\gamma \notin X^2$. One can reduce further to the case where each connected component has non-empty intersection with the set $\{ \sigma_1,\dots,\sigma_k\}$. If each connected component is singleton, then either $k=2$ and $(X_0,G_0)$ is a singleton space or $k=4$, $\gamma \notin X^2$, and $(X_0,G_0)$ is a $4$-element fan. Suppose $(X,G)$ has at least two connected components and at least one of these is not singleton. If $k=2$ then $(X,G)$ has exactly two connected components, one singleton, one non-singleton, and $(X_0,G_0)$ is isomorphic to the non-singleton component of $(X,G)$. If $k=4$, $\gamma \notin X^2$, then either $(X,G)$ has two connected components which, after reindexing, are $X_{\sigma_1\sigma_3}$ and $X_{\sigma_3\sigma_4}$, or three connected components which, after reindexing, are $\{ \sigma_1\}$, $\{ \sigma_2\}$ and $X_{\sigma_3\sigma_4}$. In either case, $(X_0,G_0)$ is a group extension by a group of order $2$ of the direct sum of the residue space of $X_{\sigma_1\sigma_2}$ associated to $\sigma_1\sigma_2$ and the residue space of $X_{\sigma_3\sigma_4}$ associated to $\sigma_3\sigma_4$. Note: In the case where $\{ \sigma_1\}$ and $\{ \sigma_2\}$ are connected components, $X_{\sigma_1\sigma_2} = \{\sigma_1,\sigma_2\}$ and the associated residue space is a singleton space. This leaves us with the case where $(X,G)$ has just one connected component. If $\gamma X = X$ then $(X_0,G_0)$ is the residue space of $(X,G)$ associated to $\gamma$. Suppose $\gamma X \ne X$. $(X,G)$ is a group extension of a SAP space of orderings $(X',G')$ by a cyclic group of order two. Let $\gamma'$ denote the restriction of $\gamma$ to $G'$. The pair $(X_0',G_0')$ associated to $\gamma'$ is a quotient of $(X',G')$, by Theorem \ref{sap}. $(X_0,G_0)$ is a group extension of $(X_0',G_0')$ by a cyclic group of order two.
\end{proof}

%The necessary conditions of Theorem \ref{nec cond} are not sufficient in general.

\begin{rem} When the stability index of $(X,G)$ is three or more there are additional necessary conditions: %which are obtained recursively:
Suppose $\sigma_1,\dots,\sigma_k$ all belong to the same connected component $(Z, H)$ of $(X,G)$, $\gamma Z \ne Z$, and $(Z,H)$ is a group extension of a space of orderings $(Z',H')$ by a cyclic group of order $2$. Let $\gamma' = \prod_{i=1}^k \sigma_i'$, where $\gamma'$ resp., $\sigma_i'$, denotes the restriction of $\gamma$, resp., $\sigma_i$, to $H'$.  By Theorem \ref{basicstuff}, if $(X_0,G_0)$ is a space of orderings then the associated quotient structure $(Z_0',H_0')$ of $(Z',H')$ is also a space of orderings. Replacing $(X,G)$ and $\gamma$ by $(Z',H')$ and $\gamma'$, additional necessary conditions are obtained recursively, in an obvious way. In particular, the conditions of Theorem \ref{nec cond} must hold for $(Z',H')$ and $\gamma'$.  It is not known if these recursively defined necessary conditions are sufficient. %They are sufficient in certain special cases, by Theorem \ref{sap} and Theorem \ref{beyond sap}.
It is easy to see that they are sufficient if the space of orderings $(X,G)$ is finite.
\end{rem}

\section{The space of orderings of $\mathbb{Q}(x)$}\label{Qx} %$(X_{\mathbb{Q}(x)}, G_{\mathbb{Q}(x)})$}

We consider the space of orderings of $\mathbb{Q}(x)$, the function field in a single variable $x$ over the field $\mathbb{Q}$ of rational numbers. This space of orderings is studied in \cite{dmm}, \cite{gj1} and \cite{gj2}. We will denote this space of orderings by $(X,G)$ for short, i.e., in this section, $$(X,G) := (X_{\mathbb{Q}(x)}, G_{\mathbb{Q}(x)}).$$
%We set up some notation.
For a real monic irreducible $p$ in the polynomial ring $\mathbb{Q}[x]$, set $n_p := $ the number of real roots of $p$  and set $X_p :=$ the set of elements of $X$ compatible with the discrete valuation $v_p$ of $\mathbb{Q}(x)$ associated to $p$, so $|X_p| = 2n_p$. Set $X_{\infty} :=$ the set of orderings compatible with the discrete valuation $v_{1/x}$, so $|X_{\infty}|=2$.  For a transcendental real number $r$, set $\sigma_r :=$ the archimedian ordering of $\mathbb{Q}(x)$ corresponding to the embedding $\mathbb{Q}(x) \hookrightarrow \mathbb{R}$ given by $x \mapsto r$. $X$ is the (disjoint) union of the sets $X_p$, $p$ running through the real monic irreducibles in $\mathbb{Q}[x]$, $X_{\infty}$, and $\{ \sigma_r\}$, $r$ running through the transcendental real numbers. The non-singleton connected components of $(X,G)$ are the $X_p$, $p$ a real monic irreducible of $\mathbb{Q}[x]$, $n_p\ge 2$.

Every monic irreducible $p$ of $\mathbb{Q}[x]$ which is not real is positive at every element of $X$, i.e., it is equal to $1$ in $G$. The set of elements $$\{ -1 \} \cup \{ p : p \text{ is a real monic irreducible in } \mathbb{Q}[x]\},$$ more precisely, the image of this set in $G$, forms a $\mathbb{Z}/2\mathbb{Z}$-basis for $G$, i.e., every element of $G$ is expressible uniquely as $$(-1)^{\delta_0}\prod_{i=1}^k p_i^{\delta_i},$$ $k\ge 0$, $p_1,\dots,p_k$ distinct real monic irreducibles in $\mathbb{Q}[x]$, $\delta_0,\dots,\delta_k \in \{ 0,1\}$.

Fix $A$, $B$ where $A$ is a finite set of real monic irreducible polynomials of $\mathbb{Q}[x]$ and $B$ is a finite set of transcendental real numbers. Set $$Y := (\bigcup_{p\in A} X_p) \cup X_{\infty}\cup \{ \sigma_r : r \in B \}.$$ Consider the set $r_1< \dots < r_m$ of real numbers consisting of the real roots of the various polynomials  $p \in A$ together with the elements of $B$. Clearly $m:= \sum_{p\in A} n_p+|B|$. Choose rational numbers $s_1,\dots, s_{m+1}$ such that $$-\infty < s_1 <r_1 <s_2<r_2< \dots <s_m<r_m <s_{m+1}<+\infty.$$ Set $H :=$ the subgroup of $G$ generated by $-1$, the elements $p\in A$, and the elements $x-s_i$, $i=1,\dots,m+1$.

\begin{lemma} \label{key} (i) $(Y, G/Y^{\perp})$ is a subspace of $(X,G)$. (ii) $(Y,G/Y^{\perp})$ is the direct sum of the subspaces $(X_p, G/X_p^{\perp})$, $p\in A$, $(X_{\infty}, G/X_{\infty}^{\perp})$, and $(\{ \sigma_r\}, G/\{\sigma_r\}^{\perp})$, $r\in B$. (iii) $(X|_H, H)$ is a quotient space of $(X,G)$. (iv) The spaces of orderings $(Y, G/Y^{\perp})$ and $(X|_H, H)$ are isomorphic via the natural maps $H \hookrightarrow G \rightarrow G/Y^{\perp}$, $Y \hookrightarrow X \rightarrow X|_H$.
\end{lemma}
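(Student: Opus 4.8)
Parts (i) and (ii) are the quick ones. By construction $Y$ is a union of finitely many connected components of $(X,G)$: each $X_p$ ($p\in A$) is one non-singleton component if $n_p\ge 2$ and a pair of singleton components if $n_p=1$, $X_\infty$ is a pair of singleton components, and each $\{\sigma_r\}$ ($r\in B$) is a singleton component. A union of finitely many connected components is a subspace by \cite[Theorem 3.6]{M2}, which proves (i) and shows that $(Y,G/Y^\perp)$ is a space of orderings. Since $Y$ is finite, the structure theorem for finite spaces \cite[Theorem 4.2.2]{M3} writes $(Y,G/Y^\perp)$ as the direct sum of its connected components (these coincide with the listed pieces, as a fan in $(Y,G/Y^\perp)$ is a fan in $(X,G)$); regrouping the two singletons of $X_\infty$, and those of each $X_p$ with $n_p=1$, into single summands yields (ii).

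The core of the lemma is (iv), and (iii) will follow from it. The point map $\rho:Y\to X|_H$, $\sigma\mapsto\sigma|_H$, is a morphism $(Y,G/Y^\perp)\to (X|_H,H)$ whose associated homomorphism of value groups is exactly $\iota:H\to G/Y^\perp$, $h\mapsto h|_Y$ (both record the pairing $\sigma(h)$, so compatibility is automatic). Hence it suffices to show $\iota$ is a group isomorphism and $\rho$ a bijection: then $(X|_H,H)\cong (Y,G/Y^\perp)$, which is a space of orderings by (i), so $(X|_H,H)$ is a space of orderings and therefore a quotient space, giving (iii).

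To see $\iota$ is surjective, i.e. $H|_Y=G|_Y$, I would use that $\{-1\}$ together with the real monic irreducibles is a $\Z/2$-basis of $G$ and check each basis element restricts into $H|_Y$. The generators $-1$, the $p\in A$, and the $x-s_i$ lie in $H$, so the only issue is a real monic irreducible $q\notin A$. Since $q$ vanishes at none of the roots of the $p\in A$, the value $\sigma(q)$ depends only on the location of $\sigma$ among $-\infty,r_1,\dots,r_m,+\infty$ and does not separate the two sides at an algebraic $r_l$; thus $q|_Y$ is \emph{location-constant}. On the other hand $-1$ fixes the value at $-\infty$ while each $x-s_i$ flips sign across exactly one of the $m+1$ gaps between consecutive locations, so $-1,(x-s_1)|_Y,\dots,(x-s_{m+1})|_Y$ realize every location-constant pattern; in particular $q|_Y$ is a product of them. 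Injectivity of $\iota$, i.e. $H\cap Y^\perp=1$, I would obtain by evaluating a hypothetical $h=(-1)^{\delta_0}\prod_{p\in A}p^{\delta_p}\prod_i(x-s_i)^{c_i}\in Y^\perp$ on the two infinitesimal sides of a root of any $p_0$ with $\delta_{p_0}=1$: only the $p_0$-factor changes sign there, forcing $\delta_{p_0}=0$, after which $h$ is location-constant and $h|_Y=1$ forces all remaining exponents to vanish.

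With $\iota$ an isomorphism, $\rho$ is injective because $H|_Y=G|_Y$ separates the points of $Y$. The remaining, and most delicate, point is surjectivity of $\rho$: given $\sigma\in X$, its signs on the $x-s_i$ localize it to exactly one of $(-\infty,s_1)$, the $(s_i,s_{i+1})$, or $(s_{m+1},+\infty)$, and a case analysis produces $\tau\in Y$ with $\tau|_H=\sigma|_H$ --- namely $\sigma_{\pm\infty}$ in the two unbounded cases, $\sigma_{r_i}$ when the enclosed $r_i$ lies in $B$, and the appropriate one-sided ordering of $X_{p_0}$ (selected by the sign $\sigma(p_0)$) when $r_i$ is a root of some $p_0\in A$. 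I expect the main obstacle to be exactly these two surjectivity statements, both of which rely on the explicit anatomy of $(X,G)$ --- the monic-irreducible basis, the sign behaviour of polynomials across their real roots, and the interleaving $s_1<r_1<\dots<r_m<s_{m+1}$ that makes each gap between consecutive special points controlled by a single $x-s_i$; the rest is formal or a routine appeal to the finite structure theory.
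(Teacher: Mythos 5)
Your proposal is correct, and for parts (i)--(iii) it matches the paper (the paper cites \cite[Theorem 3.6]{M2} for (i) and \cite[Corollary 7.5]{M1} for (ii), where you invoke the finite structure theorem and regroup singletons --- a harmless variation). For (iv) the logical architecture is genuinely different, even though both arguments ultimately rest on the same case analysis over the intervals cut out by $s_1,\dots,s_{m+1}$. The paper first proves that the point map $Y\to X|_H$ is \emph{bijective} (Claim 1: each piece $S_i|_H$ and each $S_i\cap Y$ is a singleton or a matched pair), deduces injectivity of $H\to G/Y^{\perp}$ from surjectivity of the point map, and then obtains surjectivity of the group map purely by counting: $|H|=2^{|A|+m+2}=|G/Y^{\perp}|$, where the right-hand cardinality is computed from the direct-sum decomposition (ii). You instead establish the group isomorphism directly --- surjectivity via the observation that every real monic irreducible $q\notin A$ restricts on $Y$ to a ``location-constant'' function lying in the span of $-1$ and the $(x-s_i)|_Y$, and injectivity by evaluating a putative element of $H\cap Y^{\perp}$ on the two one-sided orderings at a root --- and then derive injectivity of the point map from separation of points, proving its surjectivity by the same interval case analysis. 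Your route buys independence from the cardinality count (and hence from (ii)) at the cost of an explicit computation in the $\mathbb{Z}/2\mathbb{Z}$-basis of $G$; the paper's route is shorter because once bijectivity of $Y\to X|_H$ is known, half of the group-level statement comes for free and the other half is a one-line count. Both are complete; I see no gap in your argument.
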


\begin{proof} (i) and (ii) are consequences of \cite[Theorem 3.6]{M2} and \cite[Corollary 7.5]{M1}, respectively. One can also prove (ii) using the approximation theorem for $V$-topologies, e.g., see \cite{W}. (iii) is a consequence of (iv), so it suffices to prove (iv).

Claim 1: The map $Y \rightarrow X|_H$ is bijective. Let $S_0 :=$ the set of orderings satisfying $x<s_1$, $S_{m+1} :=$ the set of orderings satisfying $x>s_{m+1}$, and $S_i := $ the set of orderings satisfying $s_i<x<s_{i+1}$, $i=1,\dots,m$. Clearly $X = S_0\cup \dots \cup S_{m+1}$ (disjoint union) and $X|_H = S_0|_H\cup \dots \cup S_{m+1}|_H$ (disjoint union). Each $p\in A$ has constant sign on $S_0$. E.g., if $n_p$ is even (resp., odd) then $p$ is constantly positive (resp., constantly negative) on $S_0$. It follows that $S_0|_H$ is a singleton set. Also, $Y\cap S_0$ is a singleton set. A similar argument shows that $S_{m+1}|_H$ is a singleton set and $S_{m+1}\cap Y$ is a singleton set. For $1\le i\le m$, there are two cases. If $r_i\in B$, then each $p\in A$ has constant sign on $S_i$, so $S_i|_H$ is a singleton set. In this case $S_i\cap Y$ is also a singleton set. If $r_i\notin B$, then $r_i$ is a root of some unique $p \in A$. In this case $p$ changes sign at $r_i$ and the other elements of $A$ have constant sign on $S_i$, so $S_{i}|_H$ has two elements. In this case, $S_i\cap Y$ also has two elements. Also, different elements of $S_i\cap Y$ map to different elements of $S_i|_H$.

From the surjectivity of the map $Y \rightarrow X|_H$ it follows that the group homomorphism  $H \rightarrow G/Y^{\perp}$ is injective. Consequently, to complete the proof it suffices to establish the following:

Claim 2. $|H| = |G/Y^{\perp}|$. The elements of $\{ -1\} \cup A \cup \{ x-s_i : i=1,\dots,m+1\}$ form a $\mathbb{Z}/2\mathbb{Z}$-basis of $H$ so $|H| = 2^{|A|+m+2}$. Using (ii) we see that

\begin{align*}
|G/Y^{\perp}| = &\prod_{p\in A} |G/X_p^{\perp}|\cdot |G/X_{\infty}^{\perp}|\cdot \prod_{r\in B} |G/\| \sigma_r\}^{\perp}| \\ = &\prod_{p\in A} 2^{n_p+1}\cdot 2^2\cdot \prod_{r\in B} 2 = 2^{\sum_{p\in A} n_p+|A| +2+|B|}.
\end{align*}
At the same time, $m = \sum_{p\in A} n_p+|B|$, so $|A|+m+2 = \sum_{p\in A} n_p+|A| +2+|B|$.
\end{proof}

As an immediate consequence of Lemma \ref{key} we obtain a result of G\l adki and Jacob; see \cite[Theorem 1]{gj1}.

\begin{theorem} \label{gladki-jacob} The space of orderings $(X,G)$ %of $\mathbb{Q}(x)$
is profinite.
\end{theorem}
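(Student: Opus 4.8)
The plan is to show that $(X,G)$ is profinite by exhibiting it as an inverse limit of finite spaces of orderings, using Lemma \ref{key} as the essential building block. Recall that a space of orderings is profinite precisely when it is the inverse limit of an inverse system of finite spaces of orderings (equivalently, the direct limit of the associated Witt rings is taken over finite subspaces). So the strategy is to index over the pairs $(A,B)$, where $A$ is a finite set of real monic irreducibles of $\mathbb{Q}[x]$ and $B$ is a finite set of transcendental real numbers, and assemble the finite quotient spaces $(X|_H, H)$ produced by Lemma \ref{key}(iii) into an inverse system whose limit recovers $(X,G)$.

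First I would make the index set into a directed poset: order pairs by $(A,B) \le (A',B')$ iff $A \subseteq A'$ and $B \subseteq B'$. Given such a containment, the corresponding groups satisfy $H \subseteq H'$ (one must check that enlarging $A$ and $B$, together with a suitable refinement of the separating rationals $s_i$, only enlarges the generated subgroup; this is where a little care is needed, since the $s_i$ depend on the chosen real numbers $r_1 < \dots < r_m$). The inclusion $H \hookrightarrow H'$ of subgroups of $G$ dualizes to a restriction map $X|_{H'} \to X|_H$ of the finite quotient spaces, and these restriction maps are morphisms of spaces of orderings that are manifestly compatible, giving an inverse system $\{(X|_H, H)\}_{(A,B)}$.

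Next I would identify the limit. Since every element of $G$ lies in some $H$ — indeed, by the explicit $\mathbb{Z}/2\mathbb{Z}$-basis for $G$ described before the lemma, any given $g = (-1)^{\delta_0}\prod p_i^{\delta_i}$ involves only finitely many real irreducibles, so choosing $A$ to contain those $p_i$ and $B=\emptyset$ already places $g$ in the associated $H$ (after adjusting the $s_i$ appropriately, so that the relevant $x - s_i$ are available) — we get $G = \varinjlim H$, the directed union of the subgroups $H$. Dualizing, $X = \varprojlim X|_H$ as topological spaces, because restricting a character of $G$ to each $H$ determines it (the $H$ exhaust $G$) and conversely a compatible family of characters on the $H$ glues to a character on $G$. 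By Lemma \ref{key}(iii) each $(X|_H, H)$ is a genuine finite quotient space of $(X,G)$, hence a finite space of orderings, so $(X,G)$ is realized as the inverse limit of finite spaces of orderings and is therefore profinite.

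The main obstacle I expect is the bookkeeping in the first two steps: verifying that one can choose the separating rationals coherently across the directed system so that $(A,B)\le(A',B')$ actually yields $H\subseteq H'$, and that every $g\in G$ is captured by some $H$. The subtlety is that the $s_i$ are not canonically attached to $(A,B)$ but depend on the ordering of the combined root-and-transcendental set; to get a clean directed system one should either fix a coherent rule for choosing the $s_i$ (e.g. take them from a fixed countable dense set of rationals, refining as $A,B$ grow) or observe that different admissible choices of $s_i$ yield isomorphic quotient spaces and cofinal subsystems, so the limit is unaffected. Once this cofinality/coherence point is settled, the identification $G=\varinjlim H$ and $X=\varprojlim X|_H$ is routine, and profiniteness follows immediately from Lemma \ref{key}.
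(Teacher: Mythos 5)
Your proposal is correct and follows essentially the same route as the paper: both rest entirely on Lemma \ref{key}(iii) together with the observation that every element of $G$ involves only finitely many real monic irreducibles, so the finite quotient groups $H$ exhaust $G$. The paper simply invokes the criterion that it suffices to find, for each finite $S\subseteq G$, a finite quotient $(X|_H,H)$ with $S\subseteq H$ (taking $B=\emptyset$), whereas you unwind the inverse-limit definition explicitly; the directedness/coherent-choice-of-$s_i$ issue you flag is real but handled exactly as you suggest, and is bypassed entirely by the paper's formulation.
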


\begin{proof} %We continue to denote $(X_{\mathbb{Q}(x)}, G_{\mathbb{Q}(x)})$ by $(X,G)$, for short.
It suffices to show that for any finite subset $S$ of $G$ there exists a finite quotient $(X|_H,H)$ of $(X,G)$ such that $S\subseteq H$. Define $H$ as in the proof of Lemma \ref{key}, taking $A$ to be the set of real monic irreducible polynomials appearing in the factorization of the elements of $S$ and $B = \emptyset$. Then $H$ contains $S$ and $(X|_H,H)$ has the required properties.
\end{proof}

%Another consequence of Lemma \ref{key} is the following result.
%We continue to denote $(X_{\mathbb{Q}(x)}, G_{\mathbb{Q}(x)})$ by $(X,G)$, for short.
We mention another consequence of Lemma \ref{key}. Following the notation of Section \ref{general theory},
we fix a character $\gamma$ of $G$, $\gamma \ne 1$, $\gamma(-1)=1$, define $G_0 = \operatorname{ker}(\gamma)$, and $X_0 = X|_{G_0}$.

\begin{theorem}\label{2ndmain} The following are equivalent:
\begin{enumerate}
\item $(X_0,G_0)$ is a quotient space of $(X,G)$.
\item $\gamma$ satisfies the necessary conditions of Theorem \ref{nec cond}.
\item $(X_0,G_0)$ is a profinite space of orderings.
\end{enumerate}
\end{theorem}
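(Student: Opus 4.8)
The plan is to prove the cycle $(1)\Rightarrow(2)\Rightarrow(3)\Rightarrow(1)$. The implication $(1)\Rightarrow(2)$ is immediate from Theorem \ref{nec cond}, and $(3)\Rightarrow(1)$ is essentially formal: a profinite space of orderings is in particular a space of orderings, and by the definitions of Section \ref{general theory} a quotient structure that happens to be a space of orderings is automatically a quotient space. So all the content lies in $(2)\Rightarrow(3)$, and this is where I would concentrate.

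For $(2)\Rightarrow(3)$ I would first record the relevant structure of $(X,G)$: its non-singleton connected components are exactly the $X_p$ with $n_p\ge 2$, each of which is a group extension by a cyclic group of order $2$ of the SAP residue space attached to the number field $\mathbb{Q}[x]/(p)$; consequently $(X,G)$ has stability index at most $2$. Writing $\gamma=\prod_{i=1}^k\sigma_i$ as in Theorem \ref{nec cond} ($k=2$, or $k=4$ with $\gamma\notin X^2$), I would build a cofinal family of finite quotients of $(X,G)$ that remembers $\gamma$. Concretely, given any finite $S\subseteq G_0$ I choose $A$ to contain every real monic irreducible occurring in the factorizations of the elements of $S$ together with every irreducible $p$ with $X_p$ meeting $\{\sigma_1,\dots,\sigma_k\}$, and $B$ to contain the transcendental reals among the $\sigma_i$; Lemma \ref{key} then produces a finite subspace $(Y,G/Y^\perp)$, isomorphic to the finite quotient $(X|_H,H)$, with $S\subseteq H$ and each $\sigma_i\in Y$. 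Since the $\sigma_i$ lie in $Y$ we have $\gamma\in\langle Y\rangle$, so $\gamma$ descends to a nontrivial character $\bar\gamma:=\gamma|_H$ with $H_0:=\ker(\bar\gamma)=H\cap G_0$ of index $2$ in $H$. As $H_0\subseteq H$, restriction of $X$ to $H_0$ factors through restriction to $H$, so $(X|_{H_0},H_0)$ is precisely the index-two quotient structure of the finite space $(X|_H,H)$ determined by $\bar\gamma$, and $(X_0,G_0)=\varprojlim (X|_{H_0},H_0)$ over this family.

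The crux is then to verify that $\bar\gamma$ satisfies the hypotheses of Theorem \ref{nec cond} inside the finite space $(X|_H,H)$. The decomposition $\gamma=\prod\sigma_i$ transports, via the isomorphism $(Y,G/Y^\perp)\cong(X|_H,H)$ of Lemma \ref{key}(iv), to a decomposition $\bar\gamma=\prod\bar\sigma_i$ of the same length $k$. The key point is that $Y$ is a union of connected components of $(X,G)$ and, by Lemma \ref{key}(ii), the components of $(Y,G/Y^\perp)$ are exactly those of $(X,G)$ lying in $Y$; hence the connected-component data of the $\sigma_i$ (singleton versus non-singleton, and the fan subspaces $X_{\sigma_i\sigma_j}$) is preserved verbatim. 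Moreover $Y\subseteq X$ forces $Y^2\subseteq X^2$, and under the isomorphism the set $(X|_H)^2$ corresponds to $Y^2$, so $\gamma\notin X^2$ yields $\bar\gamma\notin(X|_H)^2$. Thus the appropriate alternative of Theorem \ref{nec cond} holds for $\bar\gamma$. Since $(X|_H,H)$ is finite (so has finitely many non-singleton components) and of stability index at most $2$, Theorem \ref{beyond sap} (together with Theorem \ref{sap} in the SAP case) shows that $(X|_{H_0},H_0)$ is a finite space of orderings. Therefore $(X_0,G_0)$, being the inverse limit of the finite spaces of orderings $(X|_{H_0},H_0)$, is a profinite space of orderings, which is $(2)\Rightarrow(3)$.

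I expect the main obstacle to be the transfer step of the previous paragraph: one must be certain that passing to the finite subspace $Y$ neither merges distinct connected components nor collapses the distinction between $X^2$ and $X^4$, and that the fan subspaces $X_{\sigma_i\sigma_j}$ appearing in Theorem \ref{nec cond} restrict correctly. All of this hinges on $Y$ being a union of full connected components together with the bijectivity in Claim~1 of Lemma \ref{key}; once that is in hand, the verification reduces to a case check against the list in Theorem \ref{nec cond}. A secondary point requiring care is the cofinality of the family of subgroups $H$ (so that the inverse limit genuinely reconstructs $(X_0,G_0)$), but this follows from the freedom to enlarge $A$ to absorb any prescribed finite $S\subseteq G_0$, exactly as in the proof of Theorem \ref{gladki-jacob}.
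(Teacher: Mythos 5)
Your proposal is correct and follows essentially the same route as the paper: the same choice of $A$ and $B$ feeding into Lemma \ref{key}, the same reduction to the finite quotient $(X|_H,H)\cong(Y,G/Y^{\perp})$ of stability index at most $2$, and the same appeal to Theorems \ref{sap} and \ref{beyond sap} to conclude profiniteness. Your extra care in checking that the decomposition of $\gamma$, the connected-component data, and the condition $\gamma\notin X^2$ all transfer to the finite subspace is exactly the (mostly implicit) content of the paper's argument.
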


We remark that \cite[Theorem 8]{gj2} asserts already that the implication (1) $\Rightarrow$ (3) of Theorem \ref{2ndmain} is true, but there are some gaps in the proof of \cite[Theorem 8]{gj2}.

\begin{proof} (1) $\Rightarrow$ (2) is a consequence of Theorem \ref{nec cond}. (3) $\Rightarrow$ (1) is trivial (since every profinite space of orderings is, in particular, a space of orderings). It remains to show (2) $\Rightarrow$ (3). Assume (2) holds. Let $\gamma = \prod_{i=1}^k \sigma_i$, $\sigma_i\in X$, $k= 2$ or $k=4$ and $\gamma \notin X^4$. To prove (3) it suffices to show that for any finite subset $S$ of $G_0$ there exists a finite quotient space $(X|_H,H)$ of $(X,G)$ such that $S \subseteq H$ and $(X|_{H\cap G_0},H\cap G_0)$ is a quotient space of $(X|_H,H)$. Define $H$, $Y$ as in the preamble to Lemma \ref{key}, taking $A$ to be any finite set of real monic irreducibles in $\mathbb{Q}[x]$ containing all real monic irreducible factors of elements of $S$ together with all real monic irreducibles $p$ such that $X_p \cap \{ \sigma_1,\dots,\sigma_k\} \ne \emptyset$, and taking $B$ to be any finite set of transcendental real numbers such that, for each $i=1,\dots,k$, if $\sigma_i$ is an archimedian ordering then the corresponding transcendental real number belongs to $B$. Obviously $S\subseteq H$. By Lemma \ref{key}, $(X|_H,H)$ is a quotient space of $(X,G)$ which is naturally identified with the subspace $(Y,G/Y^{\perp})$ of $(X,G)$. By construction, $Y$ is a union of connected components of $(X,G)$ and contains all components of $(X,G)$ meeting the set $\{ \sigma_1,\dots,\sigma_k\}$. Also, $(Y,G/Y^{\perp})$ is finite and has stability index $1$ or $2$. It follows, applying Theorem \ref{sap}, if the stability index is 1, or Theorem \ref{beyond sap}, if the stability index is 2, that $(Y|_{G_0}, G_0/Y^{\perp})$ is a quotient space of $(Y,G/Y^{\perp})$. Since $(X|_{H\cap G_0}, H\cap G_0)$ is identified with $(Y|_{G_0},G_0/Y^{\perp})$ under the isomorphism $(X|_H,H) \cong (Y,G/Y^{\perp})$, this completes the proof.
\end{proof}

\section{Examples of quotients of the space of orderings of the field $\Q(x)$}

Theorem \ref{2ndmain} provides us with an elegant criterion for checking whether a given quotient structure $(X_0, G_0)$ of $(X_{\Q(x)}, G_{\Q(x)})$ is a quotient space. In practice, however, there seems to be no good way of checking this criterion if $G_0$ is given in terms of generators, and we can, in fact, do this only in a few cases. We shall discuss this in some detail now.

Let $(X, G)$ be the space of orderings $(X_{\Q(x)}, G_{\Q(x)})$, and let $I$ denote the set of all (classes of) monic irreducible polynomials in $\Q[x]$ with at least one real root. Let $(X_0, G_0)$ be a fixed quotient structure of $(X, G)$ with $(G:G_0)=2$. Moreover, let $J \subseteq I$ be the set such that
$$G_0 = \langle \{-1\} \cup J \cup (I \setminus J)(I \setminus J) \rangle.$$
Observe also that $J = \{ p\in I : p \in G_0\}$, so $J$ determines uniquely and is uniquely determined by $G_0$.

\begin{example} If $J = I \setminus \{p\}$, for some $p \in I$, then $(X_0, G_0)$ is a quotient space. Indeed, suppose that $r \in \R$ is a root of $p$, and that $\sigma_r^-$ and $\sigma_r^+$ are the two orderings corresponding to $r$, one making $p$ positive, and one making $p$ negative. Let $\gamma = \sigma_r^- \cdot \sigma_r^+$. Then, readily, $G_0 = \ker \gamma$, and $(X_0, G_0)$ is a space of orderings by Theorem \ref{2ndmain}.
\end{example}

\begin{example} If $J = I \setminus \{p_1, p_2\}$, for some $p_1, p_2 \in I$, $p_1 \neq p_2$, then $(X_0, G_0)$ is a quotient space. As before, let $r_1, r_2 \in \R$ be real roots of $p_1, p_2$, respectively, and let  $\sigma_{r_i}^-$ and $\sigma_{r_i}^+$ be the two orderings corresponding to $r_i$, $i \in \{1, 2\}$. Let $\gamma = \sigma_{r_1}^- \sigma_{r_1}^+ \sigma_{r_2}^- \sigma_{r_2}^+$. Then, as before, $(X_0, G_0)$ is a quotient space by Theorem \ref{2ndmain} with $G_0 = \ker \gamma$.
\end{example}

\begin{example} If $J = I \setminus \{p_1, \ldots, p_n\}$, for some $n \geq 3$, and $p_1, \ldots, p_n \in I$ pairwise distinct, then $(X_0, G_0)$ is never a quotient space. Let $r_1, \ldots, r_n \in \R$ be real roots of $p_1, \ldots, p_n$, respectively, and let  $\sigma_{r_i}^-$ and $\sigma_{r_i}^+$ be the two orderings corresponding to $r_i$, $i \in \{1, \ldots, n\}$. Then $G_0 = \ker \sigma_{r_1}^- \sigma_{r_1}^+ \cdot \ldots \cdot \sigma_{r_n}^- \sigma_{r_n}^+$. Suppose that $(X_0, G_0)$ is a quotient space, and that $G_0 = \ker \gamma$, with $\gamma = \tau_1 \cdot \ldots \cdot \tau_4$, for some $\tau_1, \ldots, \tau_4 \in X$. Following an argument that will be later discussed in detail in the proof of Remark \ref{well-definedness} (2) we see, that the presentation $\sigma_{r_1}^- \sigma_{r_1}^+ \cdot \ldots \cdot \sigma_{r_n}^- \sigma_{r_n}^+$ cannot be shortened, and thus yield a contradiction.
%In particular
%$$\sigma_{r_1}^- \sigma_{r_1}^+ \cdot \ldots \cdot \sigma_{r_n}^- \sigma_{r_n}^+ = \tau_1 \cdot \ldots \cdot \tau_4.$$
%Let $Y_i$ be the connected component of $\tau_i$, $i \in \{1, \ldots, 4\}$. Then
%\begin{eqnarray*} \lefteqn{\tau_1 \cdot \ldots \cdot \tau_4, \sigma_{r_1}^- \sigma_{r_1}^+ \cdot \ldots \cdot \sigma_{r_n}^- \sigma_{r_n}^+ \in \chi(G|_{X_{p_1}} \oplus \ldots \oplus G|_{X_{p_n}} \oplus G|_{Y_1} \oplus \ldots \oplus  G|_{Y_4})} \\
%& = & \chi(G|_{X_{p_1}}) \oplus \ldots \oplus \chi(G|_{X_{p_n}}) \oplus \chi(G|_{Y_1}) \oplus \ldots \oplus \chi(G|_{Y_4})), \hspace{2cm} \end{eqnarray*}
%which implies that $Y_1, \ldots, Y_4 \in \{X_{p_1}, \ldots, X_{p_n}\}$. Say $\tau_1 \in X_{p_1}$ after reindexing, if necessary. Then also one of the remaining $\tau_i$, $i \in \{2, \ldots, 4\}$, belongs to $X_{p_1}$, for otherwise $\sigma_{r_1}^- \sigma_{r_1}^+ = \tau_1$ would be an ordering, which is impossible. Say $\tau_2 \in X_{p_1}$. Similarly $\tau_3, \tau_4 \in X_{p_{i_0}}$. Therefore $\tau_1 \tau_2 = \sigma_{r_1}^- \sigma_{r_1}^+$ and $\tau_3 \tau_4 = \sigma_{r_{i_0}}^- \sigma_{r_{i_0}}^+$, and consequently
%$$\iota = \frac{\sigma_{r_1}^- \sigma_{r_1}^+ \cdot \ldots \cdot \sigma_{r_n}^- \sigma_{r_n}^+}{\sigma_{r_1}^- \sigma_{r_1}^+\sigma_{r_{i_0}}^- \sigma_{r_{i_0}}^+} = 1.$$
%Since $n \geq 3$, we may choose $p \in \{p_2, \ldots, p_n\} \setminus \{p_{i_0}\}$. But then $\iota(p) = -1$, which leads to a contradiction.
\end{example}

\begin{example} If $J$ is finite, then $(X_0, G_0)$ is never a quotient space. For suppose $(X_0, G_0)$ is a quotient space with $G_0 = \ker \gamma$, for some $\gamma = \sigma_1 \cdot \ldots \cdot \sigma_4$, $\sigma_1, \ldots, \sigma_4 \in X$. Let $S$ be the finite set of all points on the real line corresponding to the orderings $\sigma_1, \ldots, \sigma_4$. Take an irreducible polynomial $q \in I$, strictly positive on the set $S$, but not belonging to $J$: we note that such a $q$ always exist, in fact, there are infinitely many such $q$. Then $q \in G_0$, which contradicts $q \notin J$. %Then $q \in \ker \gamma$, but clearly $q$ is neither divisible by any of the elements of $J$, nor a product of any two irreducilble polynomials -- a contradiction.
\end{example}

The case of both $J$ and $I \setminus J$ being infinite is widely open. %In principle, one would like to get a description of quotients similar to the ``positivity conditions'' of Example \ref{rofx}, but it is considerably more complicated compared to the case of the space of orderings of $\R(x)$.

\begin{example} Let $r_1, \ldots, r_4$ be the complete set of distinct real roots of an irreducible polynomial $q \in \mathbb{Q}[x]$. %four real algebraic numbers. Suppose they are all roots of one irreducible polynomial $q$.
Let
$$J = \{p \in I: p \mbox{ is positive at an even number of roots } r_i, i \in \{1, \ldots, 4\}\} \cup \{q\}.$$
Then $(X_0, G_0)$ is a quotient space. Indeed, one checks that if $\sigma_{r_i}^-$ and $\sigma_{r_i}^+$ are the two orderings corresponding to $r_i$, $i \in \{1, \ldots, 4\}$, then $G_0 = \ker \gamma$ for $\gamma = \sigma_{r_1}^- \sigma_{r_2}^- \sigma_{r_3}^- \sigma_{r_4}^-$, with $\sigma_{r_1}^-,  \sigma_{r_2}^-, \sigma_{r_3}^-, \sigma_{r_4}^-$ all coming from one connected component. We note that instead of $\sigma_{r_1}^- \sigma_{r_2}^- \sigma_{r_3}^- \sigma_{r_4}^-$ we can use any other combination of $\sigma_{r_i}^{\epsilon}$, $i \in \{1, \ldots, 4\}$, $\epsilon \in \{-,+\}$, making $q$ positive: at the end, they all define the same $\gamma$, since $\sigma_{r_i}^-\sigma_{r_i}^+\sigma_{r_j}^-\sigma_{r_j}^+=1$, for $i \neq j$, $i, j \in \{1, \ldots, 4\}$.
\end{example}

\begin{example} \label{tworoots} Let $r_1, \ldots, r_4$ be four real algebraic numbers again, but now suppose that $r_1$ and $r_2$ are roots of an irreducible polynomial $q_1$, and $r_3$, $r_4$ are roots of an irreducible $q_2$. Furthermore, assume that $q_1$ and $q_2$ have no roots other than $r_1, \ldots, r_4$. Let
$$J = \{p \in I: p \mbox{ is positive at an even number of roots } r_i, i \in \{1, \ldots, 4\}\} \cup \{q_1, q_2\}.$$
Then $(X_0, G_0)$ is a quotient space. Indeed, denote by $\sigma_{r_i}^-$ and $\sigma_{r_i}^+$ the two orderings corresponding to $r_i$, the first one making the minimal polynomial of $r_i$ negative, and the second one positive, $i \in \{1, \ldots, 4\}$. Depending on how the polynomials $q_1$ and $q_2$ overlap, there are different ways of defining $\gamma$. Say, for example, that $q_1$ and $q_2$ are related as in Figure \ref{fig_2}.
\begin{figure}[h!]
\centering%
\includegraphics[width=0.75\textwidth]{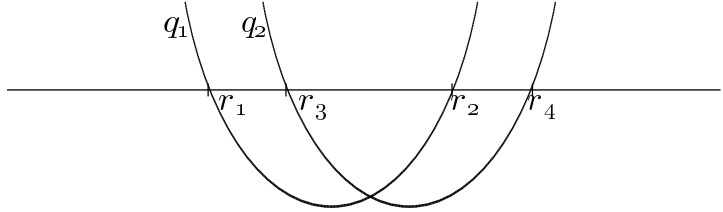}
\caption{Irreducible polynomials $q_1$ and $q_2$.}
\label{fig_2}
\end{figure}
Set $\gamma = \sigma_{r_1}^- \sigma_{r_3}^- \sigma_{r_2}^+ \sigma_{r_4}^+$. One checks that the conditions of Theorem \ref{2ndmain} are satisfied. The reader might wish to experiment with different ways of structuring roots of $q_1$ and $q_2$.
\end{example}

We note that if polynomials $q_1$ and $q_2$ in Example \ref{tworoots} have more than just two roots each, the quotient structure $(X_0, G_0)$ is, in general, not a quotient space. As details at this level are becoming too technical, we are not going to discuss this any further. %In a relatively similar way one can obtain ``positivity conditions'' for quotient structures associated to two real algebraic numbers, or four real transcendental numbers, or a number of a mix between algebraic and transcendental reals.

%We remark also that similar (but simpler) examples can be constructed starting with the space of orderings of $\mathbb{R}(x)$ instead of the space of orderings of $\mathbb{Q}(x)$.

\section{General quotients}

We continue to assume that $(X,G)$ is a space of orderings.
Fix a subgroup $G_0$ of $G$ containing $-1$, possibly having infinite index in $G$, and let $X_0$ denote the set of all restrictions of elements of $X$ to $G_0$. Denote the restriction of $\sigma \in X$ to $G_0$ by $\overline{\sigma}$. Let $S := X^4\cap \chi(G/G_0)$. Theorem \ref{main} generalizes as follows:

\begin{theorem} \label{main generalized}  A necessary condition for the quotient structure $(X_0,G_0)$ of $(X,G)$ to be a space of orderings is that $S$ generates $\chi(G/G_0)$ as a topological group, i.e., $\chi(G/G_0)$ is the closure of the subgroup of $\chi(G/G_0)$ generated by $S$, i.e., $S^{\perp} = G_0$.
\end{theorem}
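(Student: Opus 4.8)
The plan is to run the proof of Theorem \ref{main} essentially verbatim, the only genuinely new point being the passage from the single generator $\gamma$ of $\chi(G/G_0)$ (the index-two case) to the whole group $\chi(G/G_0)$. Observe first that the three formulations in the statement are mere translations of one another: every element of $S=X^4\cap\chi(G/G_0)$ is trivial on $G_0$, so $G_0\subseteq S^{\perp}$ always, and since $H\mapsto \chi(G/H)$ is the inclusion-reversing duality between (closed) subgroups of $G$ and closed subgroups of $\chi(G)$, one has $\langle S\rangle=\chi(G/G_0)\Leftrightarrow \chi(G/S^{\perp})=\chi(G/G_0)\Leftrightarrow S^{\perp}=G_0$. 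Hence the entire content is the reverse inclusion $S^{\perp}\subseteq G_0$, which I would prove by contraposition: fix $g\in G\setminus G_0$ and produce $\gamma\in S$ with $\gamma(g)=-1$.

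The first real step is to build the auxiliary function $\phi:X_0\to\{\pm1\}$, $\phi(\overline{\sigma}):=\sigma(g)$. Here lies the one difference from Theorem \ref{main}: the restriction map $r:X\to X_0$ need not be injective, so $\phi$ need not be well defined for an arbitrary $g$. It is, however, well defined whenever $g\in S^{\perp}$. Indeed $\overline{\sigma}=\overline{\tau}$ means $\sigma\tau|_{G_0}=1$, i.e. $\sigma\tau\in X^2\cap\chi(G/G_0)\subseteq S$, so $g\in S^{\perp}$ forces $(\sigma\tau)(g)=1$, that is $\sigma(g)=\tau(g)$. Thus it is precisely the standing hypothesis $g\in S^{\perp}$ that makes $\phi$ well defined, and this is what replaces the $X^2$-bookkeeping of Theorem \ref{main}. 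Continuity of $\phi$ is then automatic, since $\phi\circ r=\hat g$ is continuous on $X$ while $r$ is a continuous surjection of compact Hausdorff (Boolean) spaces, hence a closed, hence quotient, map.

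The remaining steps copy Theorem \ref{main}. I would show $\phi\notin\hat{G_0}$ using $g\notin G_0$: if $\phi=\hat h$ with $h\in G_0$, then $\sigma(g)=\phi(\overline{\sigma})=\overline{\sigma}(h)=\sigma(h)$ for every $\sigma\in X$, whence $g=h$ as elements of $\{\pm1\}^X$, so $g\in G_0$, a contradiction. Assuming now $(X_0,G_0)$ is a space of orderings, \cite[Theorem 3.2.2]{M3} furnishes a $4$-element fan $V=\{\alpha_1,\dots,\alpha_4\}\subseteq X_0$ with $\prod_{i=1}^4\phi(\alpha_i)\ne 1$. Lifting each $\alpha_i$ to some $\sigma_i\in X$ with $\overline{\sigma_i}=\alpha_i$ and setting $\gamma:=\sigma_1\sigma_2\sigma_3\sigma_4\in X^4$, one checks $\gamma\in\chi(G/G_0)$, because for $h\in G_0$ the fan relation gives $\gamma(h)=\prod_i\alpha_i(h)=1$; hence $\gamma\in S$. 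Finally $\gamma(g)=\prod_i\sigma_i(g)=\prod_i\phi(\alpha_i)=-1$, so $g\notin S^{\perp}$, completing the contrapositive.

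The main obstacle is exactly the one nontrivial input already used in Theorem \ref{main}: the reduction of the non-representability of $\phi$ to a single \emph{four}-element fan witnessing it, supplied by \cite[Theorem 3.2.2]{M3}. This is essential, since it is what lands $\gamma$ in $X^4$ rather than in some larger $X^{2^n}$. Everything else is elementary — the duality dictionary for the equivalent formulations and the well-definedness observation, the latter being the precise place where the shape of $S$ (that $X^4$ contains $X^2$) is used. I would also note that no finiteness of fans is needed, so the result holds even without the standing no-infinite-fans hypothesis.
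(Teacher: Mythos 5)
Your proof is correct and follows essentially the same route as the paper's: the paper fixes $g\in G\setminus G_0$ and splits into the case where some $\sigma,\tau$ with $\overline{\sigma}=\overline{\tau}$ already separate $g$ (yielding $\gamma=\sigma\tau\in X^2\cap\chi(G/G_0)$) and the case where $\phi(\overline{\sigma})=\sigma(g)$ is well defined, then invokes the same representation/fan criterion to produce $\gamma\in X^4\cap\chi(G/G_0)$ with $\gamma(g)\ne 1$. Your contrapositive packaging via $g\in S^{\perp}$ is just a reorganization of that case split, and your added details (continuity via the quotient map, the duality dictionary for the three formulations) are correct.
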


\begin{proof} It suffices to show that, for each $g\in G\backslash G_0$ $\exists$ $\gamma \in S$ such that $\gamma(g) \ne 1$. Fix $g \in G\backslash G_0$. Case 1: $\exists$ $\sigma, \tau \in X$ such that $\sigma(g)\ne \tau(g)$ and $\overline{\sigma} = \overline{\tau}$. In this case we take $\gamma = \sigma\tau$. Case 2: $\forall$ $\sigma,\tau \in X$, $\overline{\sigma}=\overline{\tau}$ $\Rightarrow$ $\sigma(g)=\tau(g)$. In this case, the function $\phi : X_0 \rightarrow \{ \pm 1\}$ defined by $\phi(\overline{\sigma}) = \sigma(g)$ is well-defined and continuous, and is not in the image of the natural map $G_0 \hookrightarrow \operatorname{Cont}(X_0, \{ \pm 1\})$. Thus, by \cite[Theorem 7.2]{M}, $\exists$ a $4$-element fan $\overline{\sigma_1}, \overline{\sigma_2}, \overline{\sigma_3}, \overline{\sigma_4}$ in $X_0$ such that $\prod_{i=1}^4 \phi(\overline{\sigma_i}) \ne 1$. In this case we take $\gamma = \prod_{i=1}^4 \sigma_i$.
\end{proof}

For each $\gamma \in S$, $\gamma \ne 1$, $\gamma$ has some (not necessarily unique) minimal expression $\gamma = \prod_{i=1}^k \sigma_i$, $\sigma_i\in X$, $k = 2$ or $4$. Denote by $(Y,G/\Delta)$ the subspace of $(X,G)$ generated by the connected components of the various $\sigma_i$, $i=1,\dots,k$, $\gamma$ running through $S \backslash \{ 1 \}$, and let $Y_0$ denote the set of restrictions of elements of $Y$ to $G_0$.

\begin{theorem} \label{nec cond generalized} A necessary condition for the quotient structure $(X_0,G_0)$ of $(X,G)$ to be a space of orderings is that $S$ generates $\chi(G/G_0)$ as a topological group and the quotient structure $(Y_0,G_0/\Delta)$ of $(Y,G/\Delta)$ is a space of orderings.
\end{theorem}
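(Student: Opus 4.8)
The plan is to establish both assertions of Theorem \ref{nec cond generalized} in turn. The first assertion, that $S$ generates $\chi(G/G_0)$ as a topological group (equivalently $S^\perp = G_0$), is exactly Theorem \ref{main generalized}, so I would simply invoke that result and concentrate on the second assertion: if $(X_0,G_0)$ is a space of orderings, then the quotient structure $(Y_0, G_0/\Delta)$ of $(Y, G/\Delta)$ is also a space of orderings. The natural strategy is to reduce this to the index-two case already handled in Theorem \ref{basicstuff}(i), which says that subspaces of quotient spaces restrict to quotient spaces, provided the relevant character lies in the span of the subspace.

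First I would record the structure of $Y$: by construction $Y$ is the union of all connected components of $(X,G)$ meeting the support of some minimal expression $\gamma = \prod_{i=1}^k \sigma_i$ of an element $\gamma \in S\setminus\{1\}$. Since each connected component is either a singleton or has the form $X_\delta$ and is itself a subspace (by \cite[Theorem 3.6]{M2} and \cite[Corollary 7.5]{M1}), the union $Y$ is again a subspace, so $(Y, G/\Delta)$ with $\Delta = Y^\perp$ is a bona fide space of orderings. The key observation is that each such $\gamma$ lies in $\langle Y\rangle$, because all the $\sigma_i$ appearing in its minimal expression belong to $Y$ by the very definition of $Y$; hence $\gamma \in \langle Y \rangle$ for every $\gamma \in S$.

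Next I would bring in the quotient hypothesis. Assume $(X_0, G_0)$ is a space of orderings. The restriction $(Y_0, G_0/\Delta)$ is a subspace of $(X_0, G_0)$, so it is automatically a space of orderings; the content to be checked is rather that it is the \emph{quotient} structure arising from $(Y, G/\Delta)$ by the subgroup corresponding to $G_0$, i.e. that restricting $Y$ to $G_0/\Delta$ gives a quotient space in the sense of the paper. The way to see this is to apply Theorem \ref{basicstuff}(i) componentwise, or rather to its natural generalization: since $G_0$ has possibly infinite index, $G_0/\Delta$ inside $G/\Delta$ is described by the characters in $S$, each of which lies in $\langle Y\rangle$ by the previous paragraph. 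Thus the hypotheses that let Theorem \ref{basicstuff}(i) conclude ``subspace of a quotient space is a quotient space'' are met, and $(Y_0, G_0/\Delta)$ is a quotient space of $(Y, G/\Delta)$, hence a space of orderings.

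The main obstacle I anticipate is the infinite-index bookkeeping: Theorem \ref{basicstuff}(i) as stated concerns a single index-two character $\gamma$, whereas here $G_0/\Delta$ may be cut out by infinitely many characters in $S$. The delicate point is to verify that the containments $\gamma \in \langle Y\rangle$ assemble correctly into the single containment $\Delta = Y^\perp \subseteq G_0$ together with $(G_0/\Delta)^\perp \cap \langle Y\rangle$ being generated by the images of $S$, so that the restriction of $G_0$ to $G/\Delta$ is genuinely the subgroup $S^\perp/\Delta$ within $\chi(G/\Delta)^\perp$; this is where one must use $S^\perp = G_0$ from Theorem \ref{main generalized} rather than treating the first and second assertions as independent. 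Once this identification is made, the reduction to the subspace situation is routine and the conclusion follows; I would make sure the proof explicitly notes that $Y$ is a union of full connected components, which guarantees that no character in $S$ has support escaping $Y$ and hence that the restriction map behaves as required.
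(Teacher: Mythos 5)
Your proposal is correct and follows essentially the same route as the paper: the first assertion is exactly Theorem \ref{main generalized}, and the second follows because $(Y_0,G_0/\Delta)$ is a subspace of the space of orderings $(X_0,G_0)$, which is the paper's entire (one-line) argument and which is legitimate for the reason you identify, namely that $\Delta = Y^{\perp}\subseteq S^{\perp}=G_0$. The extra verification you propose via Theorem \ref{basicstuff}(i) is unnecessary, since ``the quotient structure $(Y_0,G_0/\Delta)$ is a space of orderings'' is by definition the same as ``$(Y_0,G_0/\Delta)$ is a quotient space of $(Y,G/\Delta)$,'' so there is no separate quotient-compatibility to check.
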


\begin{proof} If the quotient structure $(X_0,G_0)$ of $(X,G)$ is a space of orderings then the quotient structure $(Y_0,G_0/\Delta)$ of $(Y,G/\Delta)$ is a subspace of $(X_0,G_0)$, so it is itself a space of orderings.
\end{proof}

The subspace $(Y,G/Y^{\perp})$ of $(X,G)$ defined above will be referred to as \it the core of the space of orderings $(X,G)$ with respect to the quotient structure $(X_0,G_0)$. \rm

\begin{rem} \label{well-definedness} (1) The connected components occurring in the definition of the core $(Y,G/\Delta)$ do not depend on the particular minimal presentations of the elements $\gamma \in S \backslash \{ 1 \}$. If we have two minimal presentations $$\gamma = \sigma_1\dots\sigma_k,\ \gamma = \tau_1\dots \tau_{\ell}, \ \sigma_i, \tau_j \in X, \ k,\ell \in \{2,4\}$$ then, using \cite[Lemma 3.2]{M-2}, we see that $\forall$ $i$ $\exists$ $j$ such that $\sigma_i \sim \tau_j$ and, similarly, $\forall$ $j$ $\exists$ $i$ such that $\tau_j \sim \sigma_i$.

\begin{proof} Since $\sigma_1\dots \sigma_k = \tau_1\dots \tau_{\ell}$ it follows that $\sigma_1\dots \sigma_k\tau_1\dots \tau_{\ell} = 1$. By hypothesis $\sigma_1,\dots,\sigma_k$ are linearly independent and $k\ge 1$. After reindexing suitably, we can assume that $\sigma_1,\dots,\sigma_k,\tau_1,\dots,\tau_{t-1}$, $1\le t \le \ell$ is a maximal linearly independent subset of $\sigma_1,\dots,\sigma_k,\tau_1,\dots,\tau_{\ell}$. Then $\tau_t$ is some linear combination of $\sigma_1,\dots,\sigma_k,\tau_1,\dots,\tau_{t-1}$, say $\tau_t = \prod_{i\in I} \sigma_i \prod_{j\in J} \tau_j$, $I \subseteq \{ 1,\dots,k\}$, $J \subseteq \{1,\dots, t-1\}$. Since $\tau_1,\dots,\tau_{\ell}$ are linearly independent we see that $I \ne \emptyset$. %and $\sigma_1\dots\sigma_k\tau_1\dots\tau_t = 1$ for some $1\le t \le \ell$.
According to \cite[Lemma 3.2]{M-2}, $\sigma_i \sim \tau_t$ for each $i \in I$. If $I= \{ 1,\dots.k\}$ we are done.  Otherwise, after canceling, we obtain $\prod_{i\in I'} \sigma_i \prod_{j\in J'} \tau_j = 1$ where $I' = \{ 1,\dots,k\} \backslash I$, $J' = \{ 1,\dots,\ell\} \backslash (J\cup \{ t\})$. The result follows now, by induction on $k$.
\end{proof}
(2) If $T$ is a maximal linearly independent subset of $S$ then the connected components coming from the elements of $S\backslash \{1\}$ are the same as the connected components coming from the elements of $T$.

\begin{proof} Suppose $\gamma \in S\backslash \{ 1 \}$, $\gamma = \gamma_1\dots \gamma_m$, $\gamma_j \in T$. Choose minimal presentations $\gamma = \prod_{i=1}^k \sigma_i$, $\gamma_j = \prod_{p=1}^{k_j} \tau_{jp}$, $\sigma_i, \tau_{jp} \in X$. We want to show that for each $i$, $\sigma_i \sim \tau_{jp}$ for some $j,p$. This reduces to showing $\sigma_1\dots \sigma_k = \tau_1\dots\tau_{\ell}$, $\sigma_i, \tau_j \in X$, $\sigma_1,\dots,\sigma_k$ linearly independent $\Rightarrow$ $\forall$ $i$ $\exists$ $j$ such that $\sigma_i \sim \tau_j$. Since it is possible to reduce further to the case where $\tau_1,\dots, \tau_{\ell}$ are linearly independent (by canceling whatever relations exist between the $\tau_j$, one by one) we see that this follows by the same argument used in (1).
\end{proof}
(3) Suppose $(X,G)$ has no infinite fans, $(G:G_0)= 2^m <\infty$, $S$ generates $\chi(G/G_0)$ as a (topological) group, and $\gamma_1,\dots,\gamma_m$ is some basis for $\chi(G/G_0)$ chosen so that each $\gamma_i$ belongs to $S$, and each $\gamma_i$ has a minimal presentation $\gamma_i = \prod_{j=1}^{k_i} \sigma_{ij}$, $\sigma_{ij}\in X$, then the core of the space of orderings $(X,G)$ with respect to the quotient structure $(X_0,G_0)$ is the union of the connected components of the various $\sigma_{ij}$. This follows from (2) in conjunction with the fact that any finite union of connected components is a subspace, by \cite[Theorem 3.6]{M2}.
\end{rem}

Again it is natural to wonder if the necessary conditions for a quotient structure to be a quotient space given by Theorem \ref{nec cond generalized} are sufficient. Although we are unable to prove this, we are able to show it is true in certain cases.

\begin{theorem} \label{SAP/finite} For a space of orderings $(X,G)$ with finitely many non-singleton connected components and no infinite fans  and a quotient structure $(X_0,G_0)$ of $(X,G)$ of finite index, the following are equivalent:
\begin{enumerate}
\item $(X_0,G_0)$ is a space of orderings.
\item $X^4\cap \chi(G/G_0)$ generates $\chi(G/G_0)$ and the quotient structure $(Y_0,G_0/\Delta)$ of the core $(Y,G/\Delta)$ is a space of orderings.
\end{enumerate}
\end{theorem}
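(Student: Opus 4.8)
The plan is to prove the nontrivial implication $(2) \Rightarrow (1)$, since $(1) \Rightarrow (2)$ is immediate from Theorem \ref{nec cond generalized}. The overall strategy is to separate the core from the singleton part of $(X,G)$ and show that $(X_0,G_0)$ is built from pieces that are already known to be spaces of orderings. First I would decompose $(X,G)$ as a direct sum $(X,G) = (Y, G/\Delta) \sqcup (W, G/W^{\perp})$, where $(Y,G/\Delta)$ is the core and $W = X \setminus Y$ consists of singleton connected components of $(X,G)$ that do not interact with the generators of $\chi(G/G_0)$; this decomposition is legitimate because a finite union of connected components is a subspace (\cite[Theorem 3.6]{M2}) and, by \cite[Corollary 7.5]{M1}, the whole space splits as the direct sum of its connected components. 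The key structural fact to extract is that, by the very definition of the core together with Remark \ref{well-definedness}(3), every $\gamma$ in a basis of $\chi(G/G_0)$ chosen from $S$ restricts to the identity on $W$, so $\gamma|_{G/W^{\perp}} = 1$ for each such generator. This means $G_0$ ``sees'' the $W$-part of $G$ entirely, i.e. the restriction $G/W^{\perp} \to$ (its image in $G_0$) is injective on the factor corresponding to $W$.

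The second step is to translate this into a statement about the quotient structure. The plan is to show that $(X_0, G_0)$ is itself a direct sum
$$(X_0,G_0) = (Y_0, G_0/\Delta) \sqcup (W, G/W^{\perp}),$$
where $(Y_0, G_0/\Delta)$ is the quotient structure of the core. The second summand is unchanged because, as noted above, the generators of $\chi(G/G_0)$ are trivial on $W$, so $G_0$ restricted to the $W$-block agrees with the full group $G/W^{\perp}$, and the orderings in $W$ remain distinct and unaffected under restriction to $G_0$. The first summand is a space of orderings precisely by hypothesis (2). Since a direct sum of spaces of orderings is again a space of orderings (\cite[Theorem 4.1.1]{M3}), this would complete the argument. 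Making the direct-sum decomposition of $(X_0, G_0)$ rigorous is where the core notion earns its keep: one must verify that $X_0$ is the disjoint union of $Y_0$ and (the image of) $W$, and that $G_0$ consists exactly of those functions restricting correctly to each block.

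The main obstacle, I expect, is justifying that the splitting of $G$ as $\chi(G)=\langle Y\rangle \times \langle W\rangle$ descends compatibly to $G_0$, i.e. that $\chi(G_0/\Delta) = \langle Y_0 \rangle \times \langle W \rangle$ with no ``mixing'' introduced by passing to the index-$2^m$ subgroup. Concretely, I must check that the characters in $\chi(G/G_0)$ live entirely in the $\langle Y\rangle$ factor, which is exactly the content of Remark \ref{well-definedness}(3): the core is generated by connected components of the $\sigma_{ij}$ appearing in minimal presentations of a basis of $\chi(G/G_0)$, so $G_0 \cap (G/W^{\perp}) = G/W^{\perp}$ in the relevant sense. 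Once this separation is established, the verification that $X_0 = Y_0 \sqcup W$ and that $G_0$ splits as a direct sum is essentially bookkeeping, and the conclusion follows from the permanence of the direct-sum construction. It is worth remarking that the hypotheses ``finitely many non-singleton connected components,'' ``no infinite fans,'' and ``finite index'' are all used to guarantee that the core $(Y,G/\Delta)$ is a finite direct sum of components with finite stability index, so that the reductions of Section \ref{general theory} (in particular Theorems \ref{sap} and \ref{beyond sap}) are available when one unwinds what it means for $(Y_0, G_0/\Delta)$ to be a space of orderings.
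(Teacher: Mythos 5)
Your reduction of (2) $\Rightarrow$ (1) to ``core plus the rest'' is the right idea, and in the case where $X$ has only finitely many connected components your argument does go through. But there is a genuine gap in the generality the theorem actually covers: the hypotheses permit infinitely many singleton components (e.g.\ any infinite SAP space such as $(X_{\R(x)},G_{\R(x)})$), and then the decomposition $(X,G)=(Y,G/\Delta)\sqcup(W,G/W^{\perp})$ with $W=X\setminus Y$ fails. The citation \cite[Corollary 7.5]{M1} yields the direct-sum decomposition only for finitely many components. When there are infinitely many, $Y$ is a closed subspace but is typically not open, so $W$ is open, non-compact, and not a subspace of $(X,G)$ at all; moreover $G$ is strictly smaller than the group of the direct sum $(Y,G/\Delta)\sqcup(W,G/W^{\perp})$, which contains functions supported on single points of $Y$ that do not come from $G$. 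Concretely, for $(X,G)=(X_{\R(x)},G_{\R(x)})$ and $\gamma=\sigma_1\sigma_2\sigma_3\sigma_4\notin X^2$ the core is the four-point set $Y=\{\sigma_1,\dots,\sigma_4\}$, no singleton of $X$ is open, and the claimed splitting of $X$, of $G$, and hence of $(X_0,G_0)$ does not exist. The ``bookkeeping'' you defer at the end is exactly where the argument breaks, and the appeal to \cite[Theorem 4.1.1]{M3} (finite direct sums) cannot be made.

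The paper's proof sidesteps this by invoking the sheaf construction of Corollary \ref{sheafcor} instead of the direct sum: one realizes $(X_0,G_0)$ as the space of global sections of a sheaf of spaces of orderings over the Boolean space obtained from the component space of $X$ by collapsing the finitely many components making up $Y$ to a single point; the stalk over that point is $(Y_0,G_0/\Delta)$, which is a space of orderings by hypothesis (2), and the remaining stalks are the untouched components of $(X,G)$, all but finitely many of which are singletons (hence SAP), so Corollary \ref{sheafcor} applies. This topological gluing over a Boolean base is precisely what replaces your direct-sum decomposition; with that substitution (and noting that $W$ may also contain non-singleton components missed by the $\sigma_{ij}$, which is harmless since there are only finitely many of them), your outline becomes the paper's proof.
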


\begin{proof} Apply Corollary \ref{sheafcor}.
\end{proof}

Observe that Theorem \ref{SAP/finite} applies, in particular, to finite spaces of orderings and to SAP spaces of orderings.

\begin{theorem} \label{Q(x)} For the space of orderings $(X,G) = (X_{\mathbb{Q}(x)}, G_{\mathbb{Q}(x)})$ and a quotient structure $(X_0,G_0)$ of $(X,G)$ of finite index, the following are equivalent:
\begin{enumerate}
\item $(X_0,G_0)$ is a space of orderings.
\item $X^4\cap \chi(G/G_0)$ generates $\chi(G/G_0)$ and the quotient structure $(Y_0,G/\Delta)$ of the core $(Y,G/\Delta)$ is a space of orderings.
\item $(X_0,G_0)$ is a profinite space of orderings.
\end{enumerate}
\end{theorem}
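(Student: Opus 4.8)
The plan is to prove the three-way equivalence for $(X,G) = (X_{\mathbb{Q}(x)}, G_{\mathbb{Q}(x)})$ by establishing the cycle $(3) \Rightarrow (1) \Rightarrow (2) \Rightarrow (3)$. The first two implications are essentially free: $(3) \Rightarrow (1)$ is trivial because every profinite space of orderings is in particular a space of orderings, and $(1) \Rightarrow (2)$ is exactly the content of Theorem \ref{nec cond generalized} (the general necessary condition), which applies here since $(X,G)$ has no infinite fans. So all the work is in $(2) \Rightarrow (3)$, and the strategy there will mirror the profiniteness argument used in Theorem \ref{gladki-jacob} and the finite-index case handled in Theorem \ref{2ndmain}, but now carried out at the level of the core.

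The key step for $(2) \Rightarrow (3)$ is to exhibit, for each finite subset $S'$ of $G_0$, a finite quotient space $(X|_H, H)$ of $(X,G)$ containing $S'$ such that $(X|_{H\cap G_0}, H\cap G_0)$ is a quotient space of $(X|_H, H)$; profiniteness of $(X_0,G_0)$ then follows by taking the inverse limit. To build $H$, I would invoke Lemma \ref{key}: choose $A$ to be a finite set of real monic irreducibles in $\mathbb{Q}[x]$ large enough to contain all real irreducible factors of the elements of $S'$, together with every irreducible $p$ such that $X_p$ meets the support of some $\sigma_{ij}$ appearing in the minimal presentations of a fixed basis $\gamma_1,\dots,\gamma_m$ of $\chi(G/G_0)$ (these exist by hypothesis, since $X^4 \cap \chi(G/G_0)$ generates). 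I would also enlarge $B$ to capture any archimedean orderings occurring among the $\sigma_{ij}$. By Remark \ref{well-definedness}(3), the core $(Y,G/\Delta)$ is then the union of the connected components of these finitely many $\sigma_{ij}$, so by construction $Y$ is contained in the subspace $(X|_H, H) \cong (Y',G/(Y')^{\perp})$ produced by Lemma \ref{key}, where $Y'$ is a finite union of connected components of $(X,G)$ containing $Y$.

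The crucial point is that $(X|_H, H)$ is then a finite space of orderings of stability index at most $2$ (since $\mathbb{Q}(x)$ has stability index $2$ and subspaces and quotients do not increase it), and it has only finitely many connected components. Restricting $\chi(G/G_0)$ along $H \hookrightarrow G$ produces a finite-index quotient structure of $(X|_H,H)$ whose core is exactly $(Y,G/\Delta)$, which is a space of orderings by hypothesis (2). I would then apply Theorem \ref{SAP/finite} to the finite space $(X|_H, H)$: since condition (2) of that theorem holds for the quotient structure $(X|_{H\cap G_0}, H\cap G_0)$, it follows that the latter is a space of orderings, i.e., a quotient space of $(X|_H, H)$. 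Under the isomorphism $(X|_H, H) \cong (Y',G/(Y')^{\perp})$ this matches up the relevant cores, so the hypothesis transfers correctly.

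The main obstacle I anticipate is verifying that the core of the finite quotient $(X|_H, H)$ genuinely coincides (up to the canonical isomorphism of Lemma \ref{key}) with the core $(Y,G/\Delta)$ of the original infinite space, and that condition (2) for $(X,G)$ descends to condition (2) of Theorem \ref{SAP/finite} for $(X|_H, H)$. This is where one must be careful that the basis $\gamma_1,\dots,\gamma_m$ and its minimal presentations behave compatibly under restriction to $H$ — in particular that the $\sigma_{ij}$ still give minimal presentations of the restricted characters and that no connected component of the core is accidentally split or merged by passing to the subspace. The well-definedness results of Remark \ref{well-definedness} are precisely what is needed to control this, so the argument reduces to a careful bookkeeping of connected components rather than any genuinely new difficulty. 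Once this identification is in place, profiniteness follows formally, since the $(X|_H, H)$ form a directed system whose limit is $(X,G)$ and the corresponding $(X|_{H\cap G_0}, H\cap G_0)$ form a directed system of finite quotient spaces whose limit is $(X_0,G_0)$.
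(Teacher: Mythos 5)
Your proposal is correct and follows essentially the same route as the paper: the implications $(3)\Rightarrow(1)$ and $(1)\Rightarrow(2)$ are dispatched exactly as you say, and $(2)\Rightarrow(3)$ is proved by choosing $A$ and $B$ in Lemma \ref{key} to capture both the factors of the given finite subset of $G_0$ and the connected components of the $\sigma_{ij}$ from minimal presentations of a basis of $\chi(G/G_0)$, then applying Theorem \ref{SAP/finite} to the resulting finite quotient $(X|_H,H)\cong(Y,G/Y^{\perp})$. The compatibility issue you flag (that the core of the finite quotient matches the core of $(X,G)$) is indeed the point controlled by Remark \ref{well-definedness}(3), exactly as in the paper.
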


\begin{proof} (1) $\Rightarrow$ (2) is a consequence of Theorem \ref{nec cond generalized}. (3) $\Rightarrow$ (1) is trivial (since every profinite space of orderings is, in particular, a space of orderings). It remains to show (2) $\Rightarrow$ (3). Assume (2) holds. Let $(G:G_0) = 2^m$, and choose $\gamma_i = \prod_{j=1}^{k_i}\sigma_{ij}$, $\sigma_{ij}\in X$, $i = 1,\dots,m$ as in Remark \ref{well-definedness}(3). To prove (3) it suffices to show that for any finite subset $W$ of $G_0$ there exists a finite quotient space $(X|_H,H)$ of $(X,G)$ such that $W \subseteq H$ and $(X|_{H\cap G_0},H\cap G_0)$ is a quotient space of $(X|_H,H)$. Define $H$, $Y$ as in the preamble to Lemma \ref{key}, taking $A$ to be any finite set of real monic irreducibles in $\mathbb{Q}[x]$ containing all real monic irreducible factors of elements of $W$ together with all real monic irreducibles $p$ such that $X_p \cap \cup\{ \sigma_{ij}: i=1,\dots,m, j = 1,\dots,k_i\} \ne \emptyset$, and taking $B$ to be any finite set of transcendental real numbers such that, for each $i=1,\dots,m$ and each $j=1,\dots, k_i$, if $\sigma_{ij}$ is an archimedian ordering then the corresponding transcendental real number belongs to $B$. Obviously $W\subseteq H$. By Lemma \ref{key}, $(X|_H,H)$ is a quotient space of $(X,G)$ which is naturally identified with the quotient space $(Y,G/Y^{\perp})$ of $(X,G)$. By construction, $Y$ is a union of connected components of $(X,G)$ and contains all components of $(X,G)$ meeting the set $\{ \sigma_{ij}: i=1,\dots,m, j=1,\dots,k_i\}$. Also, $(Y,G/Y^{\perp})$ is finite and has stability index $1$ or $2$. It follows from Theorem \ref{SAP/finite} applied to the finite space of orderings $(Y,G/Y^{\perp})$ and assumption (2) that $(Y|_{G_0}, G_0/Y^{\perp})$ is a quotient space of $(Y,G/Y^{\perp})$. Since $(X|_{H\cap G_0}, H\cap G_0)$ is identified with $(Y|_{G_0},G_0/Y^{\perp})$ under the isomorphism $(X|_H,H) \cong (Y,G/Y^{\perp})$, this completes the proof.
\end{proof}

\begin{rem} In \cite[Proposition 6]{AstTre} and \cite[Theorem 2]{gj1} it has been shown that the Lam's Open Problem B holds true for any profinite spaces of orderings. Thus Theorem \ref{Q(x)} shows, in particular, that a conceivable method of finding non-realizable spaces of orderings among quotients of the space of orderings of $\Q(x)$ of finite index will prove to be fruitless. At the same time, it remains an open problem whether profinite spaces of orderings are realizable. We note here that the dual question of whether direct limits of finite spaces of orderings are realizable was partially answered already in the early 1980's in \cite{KMS}, and recently completely resolved in \cite{AM}
\end{rem}

\section{Appendix: The sheaf construction}\label{background}

We recall %(a special case of)
the sheaf construction in \cite[Chapter 8]{M1}. The results in \cite[Chapter 8]{M1} are phrased in terms of reduced Witt rings, not spaces of orderings, but the two categories are equivalent, so these results are valid for spaces of orderings.

\begin{theorem}\label{sheaf} Suppose $(X_i,G_i)$ is a space of orderings for each $i \in I$, where $I$ is a Boolean space. Suppose $X = \dot{\cup}_{i\in I} X_i$ is equipped with a topology such that
\begin{enumerate}
\item $X$ is a Boolean space,

\item the inclusion map $X_i \hookrightarrow X$ is continuous, for each $i \in I$,

\item the projection map $\pi : X \rightarrow I$ is continuous, and

\item if $(i_{\lambda})_{\lambda \in D}$ is any net in $I$ converging to $i\in I$ and if $\sigma_1^{\lambda}, \sigma_2^{\lambda}, \sigma_3^{\lambda}, \sigma_4^{\lambda}$ is a 4-element fan in $X_{i_{\lambda}}$ such that $\sigma_j^{\lambda}$ converges to $\sigma_j \in X_i$ for each $j=1,2,3,4$, then $\sigma_1\sigma_2\sigma_3\sigma_4 = 1$.
\end{enumerate}
Then $(X,G)$ is a space of orderings, where
$$G := \{ \phi \in \operatorname{Cont}(X, \{ \pm 1\}):  \phi|_{X_i} \in \hat{G_i} \, \forall \, i\in I \}.$$
\end{theorem}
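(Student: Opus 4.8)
The plan is to verify directly that $(X,G)$ satisfies the defining axioms of a space of orderings from the introduction, reducing every verification to the fibres $(X_i,G_i)$ and then gluing over the Boolean base $I$. The essential structural observation is that value sets are computed \emph{fibrewise}: since $X = \dot\cup_{i\in I} X_i$ and membership in $D(a,b)$ is a pointwise condition, one has $c\in D_X(a,b)$ if and only if $c|_{X_i}\in D_{X_i}(a|_{X_i},b|_{X_i})$ for every $i\in I$. Granting this, that $G$ is a subgroup of $\{\pm1\}^X$ containing the constant function $-1$ is immediate (restrictions to fibres multiply inside each $\hat{G_i}$, and continuity is preserved), so only separation of points and axioms (1) and (2) remain.

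The gluing engine is the following elementary fact, which I would isolate as a lemma: for a clopen set $K\subseteq X$ the set $\{i\in I: X_i\cap K=\emptyset\}$ is open in $I$, because $K$ is compact and $\pi$ is continuous, so $\pi(K)$ is closed. Consequently any fibrewise value-set condition (``$c|_{X_i}\in D_{X_i}(\cdots)$'', which amounts to $X_i\cap K=\emptyset$ for a suitable clopen $K$ built from Harrison sets) holds on an open, hence on a union of clopen, subset of $I$; compactness of $I$ then organises such data into a \emph{finite clopen partition} of $I$, and one patches chosen global sections across the clopen preimages $\pi^{-1}(V_k)$ to obtain a single element of $G$ (continuity follows from clopenness, and the fibre condition from the pieces). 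This mechanism yields separation of points --- across fibres one pulls back a clopen set of $I$ separating the base points, within a fibre one uses that $G_i$ separates $X_i$ --- and it reduces axiom (2) to solving the instance fibrewise in each space of orderings $(X_i,G_i)$ and patching the local solutions $d_i$.

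For axiom (1) I would first \emph{locate the base point}. Given $x\in\chi(G)$ with $x(-1)=-1$ and $\ker x$ closed under $D_X$, restrict $x$ to the pulled-back functions $e_V$ ($V\subseteq I$ clopen, $e_V=+1$ on $\pi^{-1}(V)$ and $-1$ off it). A short computation with $D$-closure shows $\{V: x(e_V)=1\}$ is an ultrafilter on the clopen algebra of $I$, so by Stone duality it is the neighbourhood filter of a unique $i_0\in I$. Next one shows $x$ kills $K_{i_0}:=\{a\in G: a|_{X_{i_0}}=1\}$: if $a\equiv 1$ on the fibre then, by clopenness of $I$, $a\equiv 1$ on a tube $\pi^{-1}(V)$ with $i_0\in V$, whence $a\in D_X(1,e_V)$ with $1,e_V\in\ker x$, forcing $a\in\ker x$. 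Thus $x$ descends to a character $\bar x$ on the image of the restriction $\rho_{i_0}:G\to\hat{G_{i_0}}$; a patching argument (modify a lift off a suitable tube so that it genuinely lands in the relevant $D$-set) shows $\bar x$ is $D_{X_{i_0}}$-closed, so the fibre axiom (1) for $(X_{i_0},G_{i_0})$ produces a point $p\in X_{i_0}\subseteq X$ with $\hat p=x$.

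I expect the main obstacle to be the one ingredient used repeatedly above: the \emph{surjectivity of the restriction $\rho_{i_0}:G\to\hat{G_{i_0}}$}, equivalently the statement that a section over the fibre $X_{i_0}$ extends to a section over a clopen tube $\pi^{-1}(V)$ (after which one extends trivially off the tube). This is the genuine sheaf condition, and it is exactly here that hypothesis (4) must be consumed: the closedness of four-element-fan relations under convergent nets $i_\lambda\to i_0$, together with compactness of $X$, is what forces the fibre data to vary continuously and hence guarantees that germs over $X_{i_0}$ are realised over a neighbourhood. An efficient alternative, consistent with the framing of this appendix, is to bypass the direct verification entirely: under the equivalence between reduced Witt rings and spaces of orderings, hypotheses (1)--(4) are precisely those of the sheaf-of-reduced-Witt-rings theorem of \cite[Chapter 8]{M1} --- with (4) encoding the continuity of the Pfister/fan relations --- and $G$ is its ring of global sections, so $(X,G)$ is a space of orderings by that theorem.
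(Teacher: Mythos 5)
The paper's own proof of this theorem is a single line: ``See \cite[Theorem 8.5]{M1}.'' Your closing alternative --- invoking the sheaf theorem of \cite[Chapter 8]{M1} under the equivalence between reduced Witt rings and spaces of orderings, with $G$ as the group of global sections --- is therefore exactly the paper's argument, and on that route your proposal is correct. The direct verification you sketch first is structurally reasonable (fibrewise computation of value sets, clopen patching over the Boolean base $I$, ultrafilter localization of a character to a base point $i_0$), but as written it is not a complete proof: the decisive ingredient, namely the surjectivity of the restriction $G \to \hat{G_{i_0}}$ --- equivalently, the extension of a section over the fibre $X_{i_0}$ to a clopen tube $\pi^{-1}(V)$ --- is precisely where hypothesis (4) and the compactness of $X$ must be consumed, and you identify it as the main obstacle without carrying it out. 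That extension lemma is essentially the whole content of the theorem, so if you intend the direct route it must actually be proved; otherwise the citation you give at the end suffices, exactly as in the paper.
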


\begin{proof} See \cite[Theorem 8.5]{M1}.
\end{proof}
%See %\cite[conditions (i)-(iv), page 200]{M1} and
% for the proof. We do not need this stronger version here.

We need only the following special case of Theorem \ref{sheaf}:

\begin{cor}\label{sheafcor} Suppose $(X_i,G_i)$ is a space of orderings for each $i \in I$, where $I$ is a Boolean space, and $(X_i,G_i)$ is SAP for all but finitely many $i$. Suppose $X = \dot{\cup}_{i\in I} X_i$ is equipped with a topology such that $X$ is a Boolean space, the inclusion map $X_i \hookrightarrow X$ is continuous, for each $i \in I$, and the projection map $\pi : X \rightarrow I$ is continuous. Then $(X,G)$ is a space of orderings, where $$G := \{ \phi \in \operatorname{Cont}(X, \{ \pm 1\}):  \phi|_{X_i} \in \hat{G_i} \, \forall \, i\in I \}.$$  %, i.e., $|X_i| = 1$, $G_i = \{ \pm 1\}$.
\end{cor}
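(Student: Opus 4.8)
The plan is to derive Corollary \ref{sheafcor} from Theorem \ref{sheaf} by verifying that its hypotheses imply the four conditions of the theorem. The first three conditions---$X$ being a Boolean space, continuity of the inclusions $X_i \hookrightarrow X$, and continuity of the projection $\pi : X \rightarrow I$---are assumed verbatim in the corollary, so there is nothing to check for those. The entire content of the reduction is therefore to establish condition (4) of Theorem \ref{sheaf}: that whenever a net $(i_\lambda)_{\lambda \in D}$ in $I$ converges to $i \in I$ and $\sigma_1^\lambda, \sigma_2^\lambda, \sigma_3^\lambda, \sigma_4^\lambda$ is a $4$-element fan in $X_{i_\lambda}$ with $\sigma_j^\lambda \to \sigma_j \in X_i$ for each $j$, then $\sigma_1\sigma_2\sigma_3\sigma_4 = 1$.

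First I would use the hypothesis that $(X_i,G_i)$ is SAP for all but finitely many $i \in I$. Let $F \subseteq I$ be the finite set of exceptional indices where $(X_i,G_i)$ may fail to be SAP. Since $I$ is a Boolean space, points are closed and the finite set $F$ is closed, so its complement $I \setminus F$ is open. The key dichotomy is whether the limit index $i$ lies in $F$ or not. If $i \notin F$, then $I \setminus F$ is an open neighbourhood of $i$; since $i_\lambda \to i$, the net is eventually in $I \setminus F$, so eventually $(X_{i_\lambda}, G_{i_\lambda})$ is SAP. But a SAP space of orderings has stability index $1$ and hence contains no $4$-element fan, so eventually there is no $4$-element fan $\sigma_1^\lambda,\dots,\sigma_4^\lambda$ in $X_{i_\lambda}$ at all, and the hypothesis of condition (4) is vacuously satisfied along the tail of the net.

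The remaining case is $i \in F$. Here I would argue that the product $\sigma_1\sigma_2\sigma_3\sigma_4$, evaluated at any fixed $g \in G$, is forced to equal $1$ by a continuity/limit argument: for each $\lambda$, since $\sigma_1^\lambda,\dots,\sigma_4^\lambda$ is a $4$-element fan in $X_{i_\lambda}$, the fan relation gives $\prod_{j=1}^4 \sigma_j^\lambda(g) = 1$ for every $g \in G$ (the defining property of a fan is precisely that products of any four of its characters vanish, equivalently that $\sigma_1^\lambda\sigma_2^\lambda\sigma_3^\lambda\sigma_4^\lambda$ is the trivial character). Because $\sigma_j^\lambda \to \sigma_j$ in the character-group topology, evaluation at a fixed $g$ is eventually constant, so $\sigma_j^\lambda(g) \to \sigma_j(g)$, and passing to the limit yields $\prod_{j=1}^4 \sigma_j(g) = 1$ for every $g \in G$. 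Since this holds for all $g$, we conclude $\sigma_1\sigma_2\sigma_3\sigma_4 = 1$ as characters, establishing condition (4). With all four conditions verified, Theorem \ref{sheaf} gives that $(X,G)$ is a space of orderings, and the description of $G$ is identical in both statements.

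The step I expect to be the main (though mild) obstacle is the limit argument in the case $i \in F$: one must be careful that the fan relation $\prod_{j=1}^4 \sigma_j^\lambda(g) = 1$ really holds for \emph{all} $g \in G$ and not merely for $g$ in the subgroup of the stalk $G_{i_\lambda}$, and that the convergence $\sigma_j^\lambda \to \sigma_j$ takes place in the ambient topology on $X$ in a way that is compatible with evaluation on $G$. In fact the cleaner route may be to observe that condition (4) need only be checked when the limit stalk fails to be SAP, and that for the SAP stalks the condition is vacuous because no $4$-element fans exist there; this is exactly why the finitely-many-exceptions hypothesis is the right weakening of condition (4). Once this dichotomy is in place, the verification is routine and the corollary follows immediately.
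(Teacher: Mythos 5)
Your overall strategy (reduce to checking condition (4) of Theorem \ref{sheaf}, and exploit the finiteness of the set $F$ of non-SAP indices) is the same as the paper's, and your case $i \notin F$ is fine. But your case $i \in F$ contains a genuine gap. Condition (4) asserts that $\sigma_1\sigma_2\sigma_3\sigma_4 = 1$ \emph{as a character on the stalk group} $G_i$ (the $\sigma_j$ live in $X_i = $ a set of characters on $G_i$, and the paper's own proof computes the product in $\chi(G_i)$). What your limit argument actually shows is only that $\prod_{j=1}^4 \sigma_j(g) = 1$ for every \emph{global section} $g \in G$, i.e., that the product character is trivial on the image of the restriction map $G \rightarrow \hat{G_i}$. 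That restriction map need not be surjective, so this is strictly weaker. A telling symptom: your case-(2) argument uses nothing about SAP, so if it were correct it would prove condition (4) unconditionally, rendering it superfluous as a hypothesis of Theorem \ref{sheaf}. It is not: take $I = \{0\}\cup\{1/n : n\ge 1\}$, each $X_{1/n}$ a $4$-element fan $\{x_1^n,\dots,x_4^n\}$ converging pointwise to a $4$-element SAP stalk $X_0 = \{y_1,\dots,y_4\}$. Every $g \in G$ satisfies $\prod_j g(x_j^n)=1$ for all $n$, hence by continuity $\prod_j g(y_j)=1$; so the product $y_1y_2y_3y_4$ is trivial on all restrictions of global sections, yet it is not trivial on $G_0$ and condition (4) fails. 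Your argument would "verify" (4) in this example.

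The missing step --- and the route the paper takes --- is to observe that under the hypothesis of condition (4) \emph{every} index $i_\lambda$ carries a $4$-element fan, hence lies in the finite set $F$; since $I$ is Hausdorff and the net converges, it is eventually constant (equivalently, one passes to a constant subnet with value $i$). Then all the fans $\sigma_1^\lambda,\dots,\sigma_4^\lambda$ and their limits lie in the single stalk $X_i$, the relation $\sigma_1^\lambda\sigma_2^\lambda\sigma_3^\lambda\sigma_4^\lambda = 1$ holds in $\chi(G_i)$, and since the continuous injection $X_i \hookrightarrow X$ of a compact space into a Hausdorff space is a homeomorphism onto its image, convergence in $X$ is convergence in $\chi(G_i)$; continuity of multiplication in $\chi(G_i)$ then gives $\sigma_1\sigma_2\sigma_3\sigma_4 = 1$ in $\chi(G_i)$, which is the required conclusion. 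This is exactly where the "finitely many non-SAP stalks" hypothesis does its work, and it is the step your write-up bypasses.
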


\begin{proof} It suffices to show that condition (4) of Theorem \ref{sheaf} holds. Suppose $(i_{\lambda})_{\lambda\in D}$ is a net in $I$ satisfying the hypothesis of (4). For each $\lambda \in D$, $X_{i_{\lambda}}$ contains a 4-element fan (so, in particular, the space of orderings $(X_{i_{\lambda}}, G_{i_{\lambda}})$ is not SAP) so the set $\{ i_{\lambda}: \lambda \in D\}$ is finite. Replacing the net $(i_{\lambda})_{\lambda\in D}$ by a suitable subnet, we can assume the net $(i_{\lambda})_{\lambda \in D}$ is constant. In this case, the conclusion of (iv) is obvious, using the continuity of the multiplication in the character group $\chi(G_i)$.
\end{proof}

\end{document}